    \title  {Instanton Floer homology and the Alexander polynomial}
    \author {P. B. Kronheimer and T. S. Mrowka%
            \thanks{%
            The work of the first author
            was supported by the National Science Foundation through
            NSF grant number DMS-0405271. The work of
            the second author was supported by NSF grants DMS-0206485,
            DMS-0244663 and DMS-0805841.}} 
    \address {Harvard University, Cambridge MA 02138 \\
              Massachusetts Institute of Technology, Cambridge MA 02139}       
\begin{document}

\maketitle

\begin{abstract}
   The instanton Floer homology of a knot in $S^{3}$ is a vector space
   with a canonical mod $2$ grading. It carries a distinguished endomorphism of
   even degree, arising from the $2$-dimensional homology class
   represented by a Seifert surface. The Floer homology decomposes as
   a direct sum of the generalized eigenspaces of this endomorphism.
   We show that the Euler characteristics of these generalized
   eigenspaces are the coefficients of the Alexander polynomial of the
   knot. Among other applications, we deduce that instanton homology
   detects fibered knots.
\end{abstract}

\section{Introduction}

For a knot $K\subset S^{3}$, the authors defined in
\cite{KM-sutures} a Floer homology group $\KHI(K)$, by a slight
variant of a construction that appeared first in \cite{Floer-Durham-paper}.
In brief, one takes the knot complement $S^{3} \setminus N^{\circ}(K)$
and forms from it a closed $3$-manifold $Z(K)$ by attaching to
$\partial N(K)$ the manifold $F\times S^{1}$, where $F$ is a genus-1
surface with one boundary component. The attaching is done in such a
way that $\{\text{point}\}\times S^{1}$ is glued to the meridian of
$K$ and $\partial F \times \{\text{point}\}$ is glued to the
longitude. The vector space $\KHI(K)$ is then defined by applying
Floer's instanton homology to the closed 3-manifold $Z(K)$. We will
recall the details in section~\ref{sec:background}. If $\Sigma$ is a
Seifert surface for $K$, then there is a corresponding closed surface
$\bar\Sigma$ in $Z(K)$, formed as the union of $\Sigma$ and one copy
of $F$. The homology class  $\bar\sigma=[\bar\Sigma]$ in $H_{2}(Z(K))$ 
determines an
endomorphism $\mu(\bar\sigma)$ on the instanton homology of $Z(K)$,
and hence also an endomorphism of $\KHI(K)$. As was shown in
\cite{KM-sutures}, and as we recall below, the generalized
eigenspaces of $\mu(\bar\sigma)$ give a direct sum decomposition,
\begin{equation}\label{eq:eigenspace-decomposition}
           \KHI(K) = \bigoplus_{j=-g}^{g} \KHI(K,j).
\end{equation}
Here $g$ is the genus of the Seifert surface. In this paper, we will
define a canonical $\Z/2$ grading on $\KHI(K)$, and hence on each 
$\KHI(K,j)$, so that we may write
\[
            \KHI(K,j) = \KHI_{0}(K,j) \oplus \KHI_{1}(K,j).
\]
This allows us to define the Euler characteristic $\chi(\KHI(K,j))$ as
the difference of the ranks of the even and odd parts. The main result
of this paper is the following theorem.

\begin{theorem}\label{thm:main}
    For any knot in $S^{3}$,
    the Euler characteristics $\chi(\KHI(K,j))$ of the summands
    $\KHI(K,j)$ are minus the coefficients of
    the symmetrized Alexander polynomial $\Delta_{K}(t)$, with Conway's
    normalization. That is,
    \[
                    \Delta_{K}(t) = - \sum_{j} \chi(\KHI(K,j)) t^{j}.
    \]
\end{theorem}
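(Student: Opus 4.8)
The plan is to show that the symmetric Laurent polynomial $A_{K}(t) := -\sum_{j}\chi(\KHI(K,j))\,t^{j}$ obeys the two properties that characterize the Conway-normalized Alexander polynomial: the correct value on the unknot, and the oriented (Conway) skein relation. Since those two properties determine a knot and link invariant uniquely, matching them forces $A_{K}=\Delta_{K}$. The symmetry $\KHI(K,j)\cong\KHI(K,-j)$, which mirrors $\Delta_{K}(t)=\Delta_{K}(t^{-1})$, should come for free from a conjugation/orientation-reversal symmetry of the construction of $Z(K)$ and of the class $\bar\sigma$, so I will treat $A_{K}$ as a symmetric Laurent polynomial from the outset. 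Throughout, the computability of $A_{K}$ rests on the principle that the Euler characteristic and the eigenspace decomposition are both read off from the generators of the Floer complex and are insensitive to the differential.

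First I would extend the construction of $\KHI$ and of the $\mu(\bar\sigma)$-eigenspace decomposition to links, so that the skein partners of a knot are available as inputs. For an $n$-component link the analogous surface class still produces an eigenvalue grading, now naturally lying in $\frac12\Z$ when $n$ is even. This half-integer shift is exactly what is needed to absorb the factor $t^{1/2}-t^{-1/2}$ in Conway's relation, so I would want to pin this bookkeeping down early rather than treat it as an accident.

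The core step is an \emph{oriented skein exact triangle} for $\KHI$, compatible with the eigenvalue grading. Near a crossing one encircles the two strands by a small unknot and performs the surgeries interpolating between the positive crossing, the negative crossing and the oriented resolution; these produce closed manifolds $Z(L_{+})$, $Z(L_{-})$, $Z(L_{0})$ related through Floer's surgery exact triangle in instanton homology. I would upgrade this to a triangle compatible with the decomposition \eqref{eq:eigenspace-decomposition} by checking that the cobordisms realizing the surgeries carry the surface classes to one another with the expected grading shifts, and by tracking the behaviour of the canonical $\Z/2$ grading through the mod $2$ spectral flow along those cobordisms. Taking graded Euler characteristics of the triangle then yields a linear relation among $A_{L_{+}}$, $A_{L_{-}}$ and $A_{L_{0}}$; with the grading shifts and $t^{\pm1/2}$ weights inserted correctly this should be precisely $A_{L_{+}}-A_{L_{-}}=(t^{1/2}-t^{-1/2})\,A_{L_{0}}$.

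Finally I would settle the base case by direct calculation: $\KHI(\mathrm{unknot})$ is supported so that the weighted, signed count gives $A_{\mathrm{unknot}}=1$, fixing the overall normalization and, in particular, the minus sign asserted in the theorem. Induction on the number of crossing changes needed to unknot a diagram then propagates $A_{L}=\Delta_{L}$ from the unknot to all knots and links. I expect the main obstacle to be the core step: establishing the skein triangle in a form that respects the $\mu(\bar\sigma)$-grading, and getting every grading shift, $t^{1/2}$-weight and sign exactly right, since an error there would recover $\Delta_{K}$ only up to an undetermined substitution $t\mapsto t^{\pm1}$, a unit factor, or an overall sign. Controlling the $\Z/2$ grading under the skein cobordisms, equivalently the parity of the spectral flow, is the most delicate piece of this bookkeeping.
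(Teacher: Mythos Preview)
Your overall plan---skein triangle plus the unknot base case, tracked through the $\mu(\bar\sigma)$--eigenspace grading and the canonical $\Z/2$ grading---is exactly the paper's strategy. The gap is in the ``core step'': the surgery exact triangle does \emph{not} directly produce a sequence of the shape
\[
\cdots \to \KHI(K_{+}) \to \KHI(K_{-}) \to \KHI(K_{0}) \to \cdots
\]
in all cases. When the two strands at the crossing belong to \emph{different} components of $K_{+}$ (so $K_{0}$ has one fewer component), the zero-surgery term is not the preferred closure of the $K_{0}$ complement; one has instead
\[
\cdots \to \KHI(K_{+}) \to \KHI(K_{-}) \to \KHI(K_{0})\otimes V^{\otimes 2} \to \cdots,
\]
where $V\otimes V \cong I_{*}(\Sigma_{2}\times S^{1})_{w,+2}$ is $4$-dimensional. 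Identifying this extra factor, computing its contribution $r(t)=\str(t^{\mu([\Sigma_{2}])/2})=-(t^{1/2}-t^{-1/2})^{2}$, and pinning down its $\Z/2$ grading via an explicit excision cobordism, are the nontrivial ingredients that make the two cases combine into a single Conway relation.

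Relatedly, your mechanism for the half-integer powers of $t$ is not what actually occurs. In this construction the eigenvalues of $\mu^{o}([\Sigma])$ are even integers for links of any number of components, so the eigenspace index $j$ is always an integer; there is no intrinsic $\tfrac12\Z$ grading when the component count is even. The factor $(t^{1/2}-t^{-1/2})$ in the skein relation comes instead from an external renormalization $\tilde\chi(K)=(t^{1/2}-t^{-1/2})^{1-r}\chi(K)$, with $r$ the number of components, and the point of the $V^{\otimes 2}$ calculation is precisely that its contribution $-q(t)^{2}$ is exactly what this renormalization absorbs. Without recognizing the two-case structure and the $V^{\otimes 2}$ correction, your argument would not close.
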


The Floer homology group $\KHI(K)$ is supposed to be an  ``instanton''
counterpart to the Heegaard knot homology of Ozsv\'ath-Szab\'o and
Rasmussen \cite{Ozsvath-Szabo-knotfloer,Rasmussen-thesis}. It is known
that the Euler characteristic of Heegaard knot homology gives the
Alexander polynomial; so the above
theorem can be taken as further evidence that the two theories are
indeed closely related.

\begin{figure}
    \begin{center}
        \includegraphics[scale=0.6]{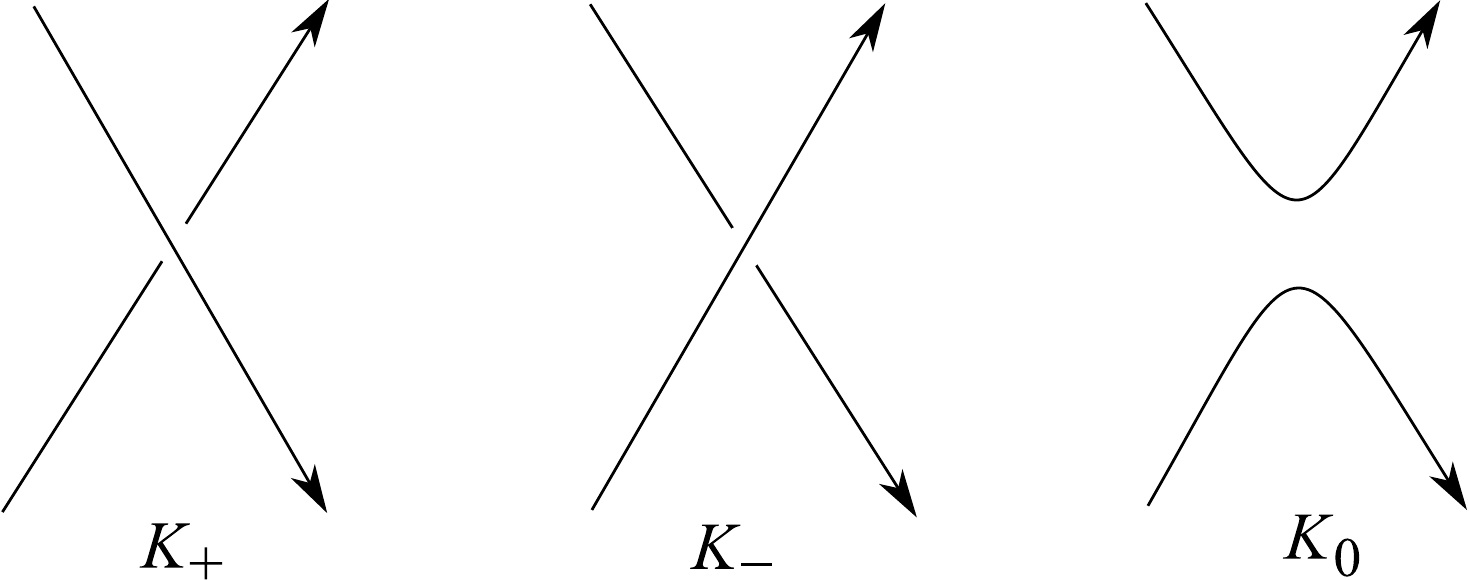}
    \end{center}
    \caption{\label{fig:Oriented-Skein}
    Knots $K_{+}$, $K_{-}$ and $K_{0}$ differing at a single crossing.}
\end{figure}

The proof of the theorem rests on Conway's skein relation for the
Alexander polynomial. To exploit the skein relation in this way, we
first extend the definition of $\KHI(K)$ to links. Then, given three
oriented knots or links $K_{+}$, $K_{-}$ and $K_{0}$ related by the
skein moves (see Figure~\ref{fig:Oriented-Skein}), we establish a long
exact sequence relating the instanton knot (or link) homologies of 
$K_{+}$, $K_{-}$ and $K_{0}$. More precisely,
if for example $K_{+}$ and $K_{-}$ are knots and $K_{0}$ is a
$2$-component link, then we will show that there is along exact
sequence 
\[
           \cdots \to \KHI(K_{+}) \to \KHI(K_{-}) \to \KHI(K_{0}) \to \cdots .
\]
(This situation is a little different when $K_{+}$ and $K_{-}$ are
$2$-component links and $K_{0}$ is a knot: see Theorem~\ref{thm:skein}.)

Skein exact sequences of
this sort for $\KHI(K)$ are not new. The definition of $\KHI(K)$
appears almost verbatim in Floer's  paper
\cite{Floer-Durham-paper}, along with outline
proofs of just such a skein sequence. See in particular part ($2'$) of
Theorem 5 in \cite{Floer-Durham-paper}, which corresponds to
Theorem~\ref{thm:skein} in this paper.
The material of Floer's paper
\cite{Floer-Durham-paper} is also presented in \cite{Braam-Donaldson}. The
proof of the skein exact sequence which we shall describe is essentially
Floer's argument, as amplified in \cite{Braam-Donaldson},
though we shall present it in the context of sutured
manifolds. The new ingredient however is the decomposition
\eqref{eq:eigenspace-decomposition} of the instanton Floer homology,
without which one cannot arrive at the Alexander
polynomial.

\medskip The structure of the remainder of this paper is as
follows. In section~\ref{sec:background}, we recall the construction
of instanton knot homology, as well as instanton homology for sutured
manifolds, following \cite{KM-sutures}. We take the opportunity here
to extend and slightly generalize our earlier results concerning these
constructions. Section~\ref{sec:skein} presents the proof of the main
theorem. Some applications are discussed in section~\ref{sec:applications}. 
The relationship between $\Delta_{K}(t)$ and the instanton homology of
$K$ was conjectured in \cite{KM-sutures}, and the result provides the
missing ingredient to show that the $\KHI$ detects fibered knots. 
Theorem~\ref{thm:main} also provides a lower bound for the rank of the
instanton homology group:

\begin{corollary}\label{cor:alexander-vs-rank}
    If the Alexander polynomial of $K$ is $\sum_{-d}^{d}a_{j} t^{j}$,
    then
    the rank of $\KHI(K)$ is not less than $\sum_{-d}^{d}|a_{j}|$.
    \qed
\end{corollary}

The corollary can be used to draw conclusions about the existence of
certain representations of the knot group in $\SU(2)$. 

\subparagraph{Acknowledgment.}
As this paper was being completed, the authors learned that
essentially the same result has been obtained simultaneously by Yuhan
Lim \cite{Lim}. The authors are grateful to the referee for pointing
out the errors in an earlier version of this paper, particularly
concerning the mod $2$ gradings.

\section{Background}
\label{sec:background}
\subsection{Instanton Floer homology}

Let $Y$ be a closed, connected, oriented $3$-manifold, and let $w\to Y$ be a
hermitian line bundle with the property that the pairing of $c_{1}(w)$
with some class in $H_{2}(Y)$ is odd. If $E\to Y$ is a $U(2)$ bundle
with $\Lambda^{2}E \cong w$, we write $\bonf(Y)_{w}$ for the
space of $\PU(2)$ connections in the adjoint bundle $\ad(E)$, modulo
the action of the gauge group consisting of automorphisms of $E$ with
determinant $1$. The instanton Floer homology group $I_{*}(Y)_{w}$ is
the Floer homology arising from the Chern-Simons functional on
$\bonf(Y)_{w}$. It has a relative grading by $\Z/8$.
Our notation for this Floer group follows
\cite{KM-sutures}; an exposition of its construction is in
\cite{Donaldson-book}. We will always use complex coefficients, so
$I_{*}(Y)_{w}$ is a complex vector space.

If $\sigma$ is a $2$-dimensional integral homology class in $Y$, then
there is a corresponding operator $\mu(\sigma)$ on $I_{*}(Y)_{w}$ of
degree $-2$. If $y\in Y$ is a point representing the generator of
$H_{0}(Y)$, then there is also a degree-$4$ operator $\mu(y)$.
The operators $\mu(\sigma)$, for $\sigma\in H_{2}(Y)$, commute with
each other and with $\mu(y)$.
As shown in \cite{KM-sutures} based on the calculations
of \cite{Munoz}, the simultaneous eigenvalues of the commuting
pair of operators
$(\mu(y),\mu(\sigma))$ all have the form
\begin{equation}\label{eq:eigenvalue-pairs}
         (2, 2k) \qquad\text{or}\qquad  (-2,  2k\sqrt{-1}),
\end{equation}
for even integers $2k$ in the range
\[
                | 2k | \le |\sigma|.
\]
Here $|\sigma|$ denotes the Thurston norm of $\sigma$, the minimum
value of $-\chi(\Sigma)$ over all aspherical embedded surfaces
$\Sigma$ with $[\Sigma]=\sigma$.

\subsection{Instanton homology for sutured manifolds}

We recall the definition of the instanton Floer homology for a
balanced sutured manifold, as introduced in \cite{KM-sutures} with
motivation from the Heegaard counterpart defined in \cite{Juhasz-1}.
The reader is referred to \cite{KM-sutures} and \cite{Juhasz-1} for
background and details.

Let
$(M,\gamma)$ be a balanced sutured manifold. Its oriented boundary is a union,
\[
            \partial M = R_{+}(\gamma) \cup A(\gamma) \cup
           (- R_{-}(\gamma))
\]
where $A(\gamma)$ is a union of annuli, neighborhoods of the sutures
$s(\gamma)$.  To define the instanton homology group $\SHI(M,\gamma)$
we proceed as follows. Let $([-1,1]\times T,\delta)$ be a product sutured
manifold, with $T$ a connected, oriented surface with boundary. The
annuli $A(\delta)$ are the annuli $[-1,1]\times \partial T$, and we
suppose these are in one-to-one correspondence with the annuli
$A(\gamma)$. We attach this product piece to $(M,\gamma)$ along the
annuli to obtain a manifold
\begin{equation}\label{eq:barM}
            \bar{M} = M \cup \bigl( [-1,1]\times T \bigr).
\end{equation}
We write
\begin{equation}\label{eq:boundary-barM}
          \partial  \bar{M} = \bar{R}_{+} \cup (-\bar{R}_{-}).
\end{equation}
We can regard $\bar{M}$ as a sutured manifold (not balanced, because it
has no sutures). The surface $\bar{R}_{+}$ and $\bar{R}_{-}$ are both
connected and are diffeomorphic. We choose an orientation-preserving
diffeomorphism
\[
            h : \bar{R}_{+} \to \bar{R}_{-}
\]
and then define $Z=Z(M,\gamma)$ as the quotient space
\[
           Z = \bar{M}/\sim,
\]
where $\sim$ is the identification defined by $h$. The two surfaces
$\bar{R}_{\pm}$ give a single closed surface
\[
            \bar{R}\subset Z.
\]

We need to impose a side condition on the choice of $T$ and $h$ in
order to proceed. We require that there is a closed curve $c$ in $T$
such that $\{1\}\times c$ and $\{-1\}\times c$ become non-separating
curves in $\bar{R}_{+}$ and $\bar{R}_{-}$ respectively; and we require
further that $h$ is chosen so as to carry $\{1\}\times c$ to
$\{-1\}\times c$ by the identity map on $c$.

\begin{definition}
    We say that $(Z,\bar{R})$ is an admissible closure of $(M,\gamma)$
    if it arises in this way, from some choice of $T$ and $h$,
    satisfying the above conditions. \CloseDef
\end{definition}

\begin{remark}
  In \cite[Definition~4.2]{KM-sutures}, there was an additional
  requirement that $\bar{R}_{\pm}$ should have genus $2$ or more. This
    was needed only in the context there of Seiberg-Witten Floer
    homology, as explained in section~7.6 of
    \cite{KM-sutures}. Furthermore, the notion of closure in 
     \cite{KM-sutures} did not require that $h$ carry $\{1\}\times c$
     to $\{-1\}\times c$, hence the qualification ``admissible'' in the
   present paper. 
\end{remark}

In an admissible closure, the curve $c$ gives rise to a torus $S^{1}\times c$
in $Z$ which meets $\bar{R}$ transversely in a circle. Pick a point
$x$ on $c$. The
closed curve $S^{1}\times \{x\}$ lies on the torus $S^{1}\times c$ and
meets $\bar{R}$ in a
single point. We write
\[
            w \to Z
\]
for a hermitian line bundle on $Z$ whose first Chern class is dual to
$S^{1}\times\{x\}$. Since $c_{1}(w)$ has odd evaluation on the closed
surface $\bar{R}$, the instanton homology group $I_{*}(Z)_{w}$ is
well-defined. As in \cite{KM-sutures}, we write
\[
            I_{*}(Z|\bar{R})_{w} \subset I_{*}(Z)_{w}
\]
for the simultaneous generalized eigenspace of the pair of operators
\[(\mu(y),\mu(\bar{R}))\] belonging to the eigenvalues $(2,2g-2)$, where
$g$ is the genus of $\bar{R}$. (See \eqref{eq:eigenvalue-pairs}.)

\begin{definition}
    For a balanced sutured manifold $(M,\gamma)$, 
   the instanton Floer homology group $\SHI(M,\gamma)$ is defined to
    be $I_{*}(Z|\bar{R})_{w}$, where $(Z,\bar{R})$ is any admissible
    closure of $(M,\gamma)$. \CloseDef.
\end{definition}

It was shown in \cite{KM-sutures} that $\SHI(M,\gamma)$ is
well-defined, in the sense that any two choices of $T$ or $h$ will
lead to isomorphic versions of $\SHI(M,\gamma)$.

\subsection{Relaxing the rules on $T$}
\label{subsec:disconnected-T}

As stated, the definition of $\SHI(M,\gamma)$ requires that we form a
closure $(Z,\bar{R})$ using a \emph{connected} auxiliary surface $T$.
We can relax this condition on $T$, with a little care, and the extra
freedom gained will be convenient in later arguments.

So let $T$ be a possibly disconnected, oriented surface with boundary.
The number of boundary components of $T$ needs to be equal to the
number of sutures in $(M,\gamma)$. We then need to choose an
orientation-reversing
diffeomorphism between $\partial T$ and $\partial R_{+}(\gamma)$, so
as to be able to form a manifold $\bar{M}$ as in \eqref{eq:barM},
gluing $[-1,1]\times \partial T$ to the annuli $A(\gamma)$. We
continue to write $\bar{R}_{+}$, $\bar{R}_{-}$ for the ``top'' and
``bottom'' parts of the boundary of $\partial \bar{M}$, as at
\eqref{eq:boundary-barM}. Neither of these need be connected, although
they have the same Euler number. We shall impose the following
conditions.
\begin{enumerate}
    \item On each connected component $T_{i}$ of $T$, there is an
    oriented
    simple closed curve $c_{i}$ such that the corresponding curves
    $\{1\}\times c_{i}$ and $\{-1\}\times c_{i}$ are both
    non-separating on $\bar{R}_{+}$ and $\bar{R}_{-}$ respectively.

    \item \label{item:T-condition-2} There exists a diffeomorphism $h :
    \bar{R}_{+}\to\bar{R}_{-}$ which carries $\{1\}\times c_{i}$ to
    $\{-1\}\times c_{i}$ for all $i$, as oriented curves.

    \item There is a $1$-cycle $c'$ on $\bar{R}_{+}$ which intersects
    each curve $\{1\}\times c_{i}$ once.
\end{enumerate}
We then choose any $h$ satisfying \ref{item:T-condition-2} and use $h$
to identify the top and bottom, so forming a closed pair $(Z,\bar{R})$ as
before. The surface $\bar{R}$ may have more than one component (but no
more than the number of components of $T$). No component of $\bar{R}$
is a sphere, because each component contains a non-separating curve.
We may regard $T$ as a codimension-zero submanifold of $\bar{R}$ via
the inclusion of $\{1\}\times T$ in $\bar{R}_{+}$.

For each component $\bar{R}_{k}$ of $\bar{R}$, we now choose one
corresponding component $T_{i_{k}}$ of $T\cap\bar{R}_{k}$. We take
$w\to Z$
to be the complex line bundle with $c_{1}(w)$ dual to the sum of the
circles $S^{1}\times \{x_{k}\}\subset S^{1}\times c_{i_{k}}$. Thus
$c_{1}(w)$ evaluates to $1$ on each component
$\bar{R}_{k}\subset\bar{R}$. We may then consider the instanton Floer
homology group $I_{*}(Z|\bar{R})_{w}$.

\begin{lemma}\label{lem:relaxed-independence}
    Subject to the conditions we have imposed, the Floer homology
    group $I_{*}(Z|\bar{R})_{w}$ is independent of the choices made.
    In particular, $I_{*}(Z|\bar{R})_{w}$ is isomorphic to
    $\SHI(M,\gamma)$.
\end{lemma}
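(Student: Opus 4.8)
The plan is to reduce the case of a disconnected auxiliary surface $T$ to the already-established connected case by interpolating through a sequence of modifications, each of which either preserves the Floer group $I_{*}(Z|\bar{R})_{w}$ on the nose or relates it by a known isomorphism. The key observation is that the two conditions defining admissibility in the connected setting --- the existence of a nonseparating curve $c$ carried correctly by $h$, and the odd evaluation of $c_{1}(w)$ on $\bar{R}$ --- are exactly what guarantees that $I_{*}(Z)_{w}$ is well-defined and that the eigenspace $I_{*}(Z|\bar{R})_{w}$ makes sense; the relaxed conditions (1)--(3) are designed precisely so that these same structural facts persist componentwise. So the first step is to check that under conditions (1)--(3) the bundle $w$ genuinely has odd pairing with every component of $\bar{R}$ (this is immediate from the construction of $c_{1}(w)$ as the dual of $\sum_k S^{1}\times\{x_k\}$), so that each of the operators $\mu(\bar{R}_k)$ and the group $I_{*}(Z)_{w}$ are defined, and the simultaneous eigenspace for $(\mu(y),\mu(\bar{R}))$ at $(2,2g-2)$ is meaningful.

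The main step is to connect the components of $T$ by attaching one-handles (equivalently, tubing the components of $\bar{R}$ together) to produce a connected surface $T'$, and to show that this operation does not change the Floer group in the relevant eigenspace. Concretely, I would pass from $T$ to a connected $T'$ by adding tubes that join $T_i$ to $T_{i+1}$ inside $\bar{M}$; at the level of the closed manifold $Z$ this amounts to a connected-sum or tubing construction relating $(Z,\bar{R})$ to a new closure $(Z',\bar{R}')$ with $\bar{R}'$ connected. The genus changes in a controlled way, and the eigenvalue labeling $(2,2g-2)$ shifts accordingly, so the content is that the top eigenspace is preserved. Here I would invoke the established well-definedness of $\SHI(M,\gamma)$ from \cite{KM-sutures} for connected $T$ together with the excision and eigenspace-tracking arguments used there: the nonseparating curves $c_i$ provide the meridional data needed to apply the excision principle that identifies the top eigenspaces before and after tubing. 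Condition (3), the existence of a $1$-cycle $c'$ meeting each $\{1\}\times c_i$ once, is what ensures the tubed-up configuration still admits a single nonseparating curve playing the role of $c$, so that $(Z',\bar{R}')$ is an \emph{admissible} closure in the original connected sense.

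Once the disconnected $T$ is related to a connected $T'$ whose closure is admissible, the independence from all remaining choices --- the particular $T'$, the diffeomorphism $h$, and the choice of components $T_{i_k}$ used to define $w$ --- follows from the already-known invariance statement for connected admissible closures proved in \cite{KM-sutures}. Thus the final identification $I_{*}(Z|\bar{R})_{w}\cong\SHI(M,\gamma)$ is obtained by composing the tubing isomorphism with the known isomorphism between any two connected admissible closures.

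I expect the main obstacle to be the excision argument that tracks the top eigenspace of $(\mu(y),\mu(\bar{R}))$ through the tubing operation when $\bar{R}$ is disconnected. One must verify that the eigenvalue $(2,2g-2)$ genuinely distributes as $(2,2g_k-2)$ across the components $\bar{R}_k$ --- so that the simultaneous top eigenspace is the tensor-product-like combination of the component top eigenspaces --- and that attaching a tube between two components combines these correctly without introducing lower-eigenvalue contributions. This requires careful use of the structure of the eigenvalues in \eqref{eq:eigenvalue-pairs} and of the multiplicativity of the excision isomorphism on eigenspaces, and it is where the conditions (1)--(3) must be deployed in full.
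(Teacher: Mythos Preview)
Your high-level strategy --- reduce the number of components of $T$ one at a time until $T$ is connected, then invoke the known invariance from \cite{KM-sutures} --- is exactly the strategy of the paper. But the mechanism you describe for the reduction step is not quite right, and you have misidentified where condition~(3) enters.

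You propose to ``add tubes'' joining $T_{i}$ to $T_{i+1}$ inside $\bar{M}$, and then appeal loosely to excision to compare the Floer groups before and after. The paper does not tube; it applies Floer's torus-excision theorem \emph{directly} to the closed manifold $Z$. The relevant tori are $S^{1}\times c_{i}$ and $S^{1}\times c_{j}$ sitting inside $Z$; cutting $Z$ along these and regluing (swapping boundary components) produces a new $(Z',\bar{R}')$ which is again a closure of $(M,\gamma)$, but now with $T_{i}$ and $T_{j}$ fused into a single component. No handles are added and no cobordism is needed; excision gives an on-the-nose isomorphism of the relevant eigenspaces. Your ``tubing'' language and the phrase ``connected-sum or tubing construction'' suggest a genuinely different operation that would require its own justification.

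More importantly, you have the role of condition~(3) wrong. You say it guarantees the resulting connected closure is admissible. In the paper it serves a different and essential purpose: excision along $S^{1}\times c_{i}$ requires that $c_{1}$ of the line bundle be \emph{odd} on that torus, and the original $w$ need not satisfy this. Condition~(3) supplies a $1$-cycle $c'$ meeting each $c_{i}$ once, so the line bundle $u$ with $c_{1}(u)$ dual to $c'$ \emph{does} have odd pairing with every $S^{1}\times c_{i}$. The paper first trades $w$ for $u$ via the isomorphism $I_{*}(Z|\bar{R})_{w}\cong I_{*}(Z|\bar{R})_{u}$ (section~7.4 of \cite{KM-sutures}), then performs excision with $u$, then trades back. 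Without this line-bundle switch your excision step cannot even be invoked. Finally, there is a subcase you do not anticipate: if both $c_{i}$ and $c_{j}$ happen to be separating in their respective components of $T$, cutting and regluing does not reduce the component count, and a preliminary genus-raising trick is needed.
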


\begin{proof}
    By a sequence of applications of the excision property of Floer
    homology \cite{Floer-Durham-paper, KM-sutures}, we shall establish that
    $I_{*}(Z|\bar{R})_{w}$ is isomorphic to $I_{*}(Z'|\bar{R}')_{w'}$,
    where the latter arises from the same construction but with a
    \emph{connected} surface $T'$. Thus $I_{*}(Z'|\bar{R}')_{w'}$ is
    isomorphic to $\SHI(M,\gamma)$ by definition: its independence
    of the choices made is proved in \cite{KM-sutures}.

    We will show how to reduce the number of components
    of $T$ by one. Following the argument of \cite[section
    7.4]{KM-sutures}, we have an isomorphism
    \begin{equation}\label{eq:u-to-w}
                    I_{*}(Z|\bar{R})_{w} \cong I_{*}(Z|\bar{R})_{u},
    \end{equation}
    where $u\to Z$ is the complex line bundle whose first Chern class
    is dual to the cycle $c'\subset Z$. 
    We shall suppose in the fist instance that at least one of $c_{i}$
    or $c_{j}$ is non-separating in the corresponding component
    $T_{i}$ or $T_{j}$.
    Since $c_{1}(u)$ is odd on
    the $2$-tori $S^{1}\times c_{i}$ and $S^{1}\times c_{j}$, we can
    apply Floer's excision theorem (see also
    \cite[Theorem~7.7]{KM-sutures}): we cut $Z$ open along these two
    $2$-tori and glue back to obtain a new pair $(Z' | \bar{R}')$,
    carrying a line bundle $u'$, and we have
    \[
         I_{*}(Z|\bar{R})_{u} \cong  I_{*}(Z'|\bar{R}')_{u'}.            
    \]
    Reversing the construction that led to the isomorphism
    \eqref{eq:u-to-w}, we next have
    \[
         I_{*}(Z'|\bar{R}')_{u'} \cong  I_{*}(Z'|\bar{R}')_{w'},            
     \]
    where the line bundle
    $w'$ is dual to a collection of circles $S^{1}\times\{x'_{k'}\}$,
    one for each component of $\bar{R}'$. 
    The pair $(Z',\bar{R}')$ is obtained from the sutured
    manifold $(M,\gamma)$ by the same construction that led to
    $(Z,R)$, but with a surface $T'$ having one fewer components: the
    components $T_{i}$ and $T_{j}$ have been joined into one component
    by cutting open along the circles $c_{i}$ and $c_{j}$ and
    reglueing.

    If both $c_{i}$ and $c_{j}$ are separating in $T_{i}$ and $T_{j}$
    respectively, then the above argument fails, because $T'$ will
    have the same number of components as $T$. In this case, we can
    alter $T_{i}$ and $c_{i}$ to make a new $T'_{i}$ and $c'_{i}$,
    with $c'_{i}$ non-separating in $T'_{i}$. For example, we may
    replace $Z$ by the disjoint union $Z \amalg Z_{*}$, where $Z_{*}$
    is a product $S^{1}\times T_{*}$, with $T_{*}$ of genus $2$. In
    the same manner as above, we can cut $Z$ along $S^{1}\times c_{i}$
    and cut $Z_{*}$ along $S^{1}\times c_{*}$, and then reglue,
    interchanging the boundary components. The effect of this is to
    replace $T_{i}$ be a surface $T'_{i}$ of genus one larger. 
    We can take $c'_{i}$ to be a non-separating curve on $T_{*}
    \setminus c_{*}$.
\end{proof}

\subsection{Instanton homology for knots and links}
\label{subsec:inst-homology-link}

Consider a link $K$ in a closed oriented $3$-manifold $Y$. Following
Juh\'asz
\cite{Juhasz-1}, we can associate to $(Y,K)$ a sutured manifold
$(M,\gamma)$ by taking $M$ to be the link complement and taking the
sutures $s(\gamma)$ to consist of two oppositely-oriented meridional
curves on each of the tori in $\partial M$. As in \cite{KM-sutures},
where the case of knots was discussed, we take Juh\'asz'
prescription as a definition for the instanton knot (or link) homology
of the pair $(Y,K)$:

\begin{definition}[\textit{cf.} \cite{Juhasz-1}]
    We define the instanton homology of the link $K\subset Y$ to be
    the instanton Floer homology of the sutured manifold $(M,\gamma)$
    obtained from the link complement as above. Thus,
    \[
                    \KHI(Y,K) = \SHI(M,\gamma).
    \]
    \CloseDef
\end{definition}

Although we are free to choose any admissible closure $Z$ in
constructing $\SHI(M,\gamma)$, we can exploit the fact that we are
dealing with a link complement to narrow our choices. Let $r$ be the
number of components of the link $K$. Orient $K$ and choose a
longitudinal oriented curve $l_{i}\subset \partial M$ on the
peripheral torus of each component $K_{i}\subset K$. Let $F_{r}$ be a
genus-1 surface with $r$ boundary components, $\delta_{1},\dots,\delta_{r}$. Form a closed manifold
$Z$ by attaching $F_{r}\times S^{1}$ to $M$ along their
boundaries:
\begin{equation}\label{eq:special-closure}
                Z = (Y\setminus N^{o}(K))
                 \cup (F_{r}\times S^{1}).
\end{equation} 
The attaching is done so that the curve $p_{i}\times
S^{1}$ for $p_{i}\in \delta_{i}$ is attached to the meridian of
$K_{i}$ and $\delta_{i}\times \{q\}$ is attached to the chosen
longitude $l_{i}$. We can view $Z$ as a closure of $(M,\gamma)$ in
which the auxiliary surface $T$ consists of $r$ annuli,
\[
    T = T_{1}\cup \dots \cup T_{r}.
\]
The two
sutures of the product sutured manifold
$[-1,1]\times T_{i}$ are attached to meridional sutures on the
components of $\partial M$ corresponding to $K_{i}$ and $K_{i-1}$ in
some cyclic ordering of the components.
Viewed this way, the corresponding surface
$\bar{R}\subset Z$ is the torus
\[
            \bar{R} = \nu \times S^{1}
\]
where $\nu\subset F_{r}$ is a closed curve representing a generator
of the homology of the closed genus-1 surface obtained by adding disks
to $F_{r}$.
Because $\bar{R}$ is a torus,
the group
$I_{*}(Z|\bar{R})_{w}$ can be more simply described as the
generalized eigenspace of $\mu(y)$ belonging to the eigenvalue $2$,
for which we temporarily introduce the notation
$I_{*}(Z)_{w,+2}$. Thus we can write
\[
        \KHI(Y,K) = I_{*}(Z)_{w,+2}.
\]

An important special case for us is when $K \subset Y$ 
is null-homologous in $Y$ with its given orientation. In this case,
we may choose a Seifert surface $\Sigma$, which we regard as a
properly embedded oriented surface in $M$ with oriented boundary a
union of longitudinal curves, one for each component of $K$. When a
Seifert surface is given, we have a \emph{uniquely preferred} closure $Z$,
obtained as above but using the longitudes provided by $\partial
\Sigma$.   Let us fix a Seifert surface
$\Sigma$ and write $\sigma$ for its homology class in
$H_{2}(M,\partial M)$. The preferred closure of the sutured link
complement is entirely determined by $\sigma$.

\subsection{The decomposition into generalized eigenspaces}

We continue to suppose that $\Sigma$ is a Seifert surface for the
null-homologous oriented knot $K\subset Y$. We write $(M,\gamma)$ for
the sutured link complement and $Z$ for the preferred closure.

The homology class $\sigma = [\Sigma]$ in $H_{2}(M,\partial M)$
extends to a class $\bar\sigma = [\bar\Sigma]$ in $H_{2}(Z)$: the
surface $\bar\Sigma$ is formed from the Seifert surface $\Sigma$ and
 $F_{r}$,
\[
            \bar\Sigma = \Sigma\cup  F_{r}.
\]
The homology class $\bar\sigma$ determines an endomorphism
\[
                \mu(\bar\sigma) : I_{*}(Z)_{w,+2} \to  I_{*}(Z)_{w,+2}.
\]
This endomorphism is traceless, a consequence of the relative $\Z/8$
grading:  there is an endomorphism $\epsilon$ of $I_{*}(Z)_{w}$ given
by multiplication by $(\sqrt{-1})^{s}$ on the part of relative grading
$s$, and this $\epsilon$ commutes with $\mu(y)$ and anti-commutes
with $\mu(\bar\sigma)$. We write this traceless
endomorphism as
\begin{equation}\label{eq:mu-o}
                \mu^{o}(\sigma) \in \sl( \KHI(Y,K)).
\end{equation}
Our notation hides the fact that the construction depends (a priori)
on the existence of the preferred closure $Z$, so that $\KHI(Y,K)$ can
be canonically identified with $I_{*}(Z)_{w,+2}$.

It now follows from \cite[Proposition~7.5]{KM-sutures} that the eigenvalues of
$\mu^{o}(\sigma)$ are even integers $2j$ in the range
$-2\bar{g}+2 \le 2j \le 2\bar{g}-2$, where $\bar{g}=g(\Sigma)+r $ is
the genus of $\bar{\Sigma}$. Thus:

\begin{definition}
    For a null-homologous oriented link $K\subset Y$ with a chosen
    Seifert surface $\Sigma$, we write
    \[
                    \KHI(Y,K,[\Sigma], j)
                    \subset \KHI(Y,K)
    \]
    for the generalized eigenspace of $\mu^{o}([\Sigma])$ belonging to
    the eigenvalue $2j$, so that
   \[
                    \KHI(Y,K) =
                    \bigoplus_{j=-g(\Sigma)+1-r}^{g(\Sigma)-1+r}
                    \KHI(Y,K,[\Sigma], j),
   \]
   where $r$ is the number of components of $K$.
    If $Y$ is a homology sphere, we may omit
    $[\Sigma]$ from the notation; and if $Y$ is $S^{3}$ then we simply
    write $\KHI(K,j)$. \CloseDef
\end{definition}

\begin{remark}
    The authors believe that, for a general sutured manifold
    $(M,\gamma)$, one can define a unique linear map
\[
       \mu^{o} : H_{2}(M,\partial M) \to \sl ( \SHI(M,\gamma))   
\]
characterized by the property that for any admissible closure
        $(Z,\bar{R})$ and any $\bar{\sigma}$ in $H_{2}(Z)$ 
        extending $\sigma \in H_{2}(M,\partial M)$ we have
       \[ \mu^{o}(\sigma) = \text{traceless part of $\mu(\bar\sigma)$},
        \]
under a suitable
identification of $I_{*}(Z| \bar{R})_{w}$ with $\SHI(M,\gamma)$. The
authors will return to this question in a future paper. For now, we
are exploiting the existence of a preferred closure $Z$ so as to
side-step the issue of whether $\mu^{o}$ would be well-defined,
independent of the choices made.
\end{remark}

\subsection{The mod 2 grading}
\label{subsec:mod-2-grading}

If $Y$ is a closed $3$-manifold, then the instanton homology group
$I_{*}(Y)_{w}$ has a canonical decomposition into parts of even and
odd grading mod $2$. For the purposes of this paper, we normalize our
conventions so that the two generators of $I_{*}(T^{3})_{w}=\C^{2}$
are in \emph{odd} degree. As in \cite[section~25.4 ]{KM-book}, the
canonical mod $2$ grading is then essentially determined by the
property that, for a cobordism $W$ from a manifold $Y_{-}$ to $Y_{+}$,
the induced map on Floer homology has even or odd grading according to
the parity of the integer
\begin{equation}\label{eq:iota-W} \iota(W) = \frac{1}{2}
     \Bigl( \chi(W) + \sigma(W) + 
       b_1(Y_+) - b_0(Y_+) - b_1(Y_-) + b_0(Y_-)\Bigr).
\end{equation}
(In the case of connected manifolds $Y_{+}$ and $Y_{-}$, 
this formula reduces to the one that appears in \cite{KM-book} for the monopole
case. There is more than one way to extend the formula to the case of
disconnected manifolds, and we have simply chosen one.) 
By declaring that the generators for
$T^{3}$ are in odd degree, we ensure that the canonical mod $2$
gradings behave as expected for disjoint unions of the $3$-manifolds.
Thus, if $Y_{1}$ and $Y_{2}$ are the connected components of a
$3$-manifold $Y$ and $\alpha_{1}\otimes \alpha_{2}$ is a class on $Y$
obtained from $\alpha_{i}$ on $Y_{i}$, then $\gr(\alpha_{1}\otimes
\alpha_{2})$ is $\gr(\alpha_{1}) + \gr(\alpha_{2})$ in $\Z/2$ as
expected. 

Since the Floer homology $\SHI(M,\gamma)$ of a sutured manifold
$(M,\gamma)$ is defined in terms of $I_{*}(Z)_{w}$ for an admissible
closure $Z$, it is tempting to try to define a canonical mod $2$
grading on $\SHI(M,\gamma)$ by carrying over the canonical mod $2$
grading from $Z$. This does not work, however, because the result will
depend on the choice of closure. This is illustrated by the fact that
the mapping torus of a Dehn twist on $T^{2}$ may have Floer homology
in \emph{even} degree in the canonical mod $2$ grading  (depending on
the sign of the Dehn twist), despite the fact that both $T^{3}$ and
this mapping torus can be viewed as closures of the same sutured
manifold.

We conclude from this that, without auxiliary choices, there is no
\emph{canonical} mod $2$ grading on $\SHI(M,\gamma)$ in general: only
a relative grading. Nevertheless, in the special case of an oriented
null-homologous knot or link $K$ in a closed $3$-manifold $Y$, we
\emph{can} fix a convention that gives an absolute mod $2$ grading,
once a Seifert surface $\Sigma$ for $K$ is given.  We simply take the
preferred closure $Z$ described above in
section~\ref{subsec:inst-homology-link}, using $\partial\Sigma$ again
to define the longitudes, so that $\KHI(Y,K)$ is identified with
$I_{*}(Z)_{w,+2}$, and we use the canonical mod $2$ grading from the
latter.

With this convention, the unknot $U$ has $\KHI(U)$ of rank $1$, with
a single generator in odd grading mod $2$.

\section{The skein sequence}
\label{sec:skein}

\subsection{The long exact sequence}

Let $Y$ be any closed, oriented $3$-manifold, and let $K_{+}$, $K_{-}$
and $K_{0}$ be any three oriented knots or links in $Y$ which are related by
the standard skein moves: that is, all three links coincide outside a
ball $B$ in
$Y$, while inside the ball they are as shown in
Figure~\ref{fig:Oriented-Skein}.  There are two cases which occur
here: the two strands of $K_{+}$ in $B$ may belong to the same
component of the link, or to different components. In the first case
$K_{0}$ has one more component than $K_{+}$ or $K_{-}$, while in the
second case it has one fewer.

\begin{theorem}[\textit{cf.} Theorem 5 of \cite{Floer-Durham-paper}]
\label{thm:skein}
    Let $K_{+}$, $K_{-}$ and $K_{0}$ be oriented links in $Y$ as
    above. Then, in the case that $K_{0}$ has one more component than
    $K_{+}$ and $K_{-}$, there is a long exact sequence relating the
    instanton homology groups of the three links,
    \begin{equation}\label{eq:skein-first}
               \cdots\to \KHI(Y,K_{+}) \to \KHI(Y,K_{-}) \to \KHI(Y,K_{0}) \to
                \cdots.
    \end{equation}
    In the case that $K_{0}$ has fewer components that $K_{+}$ and
    $K_{-}$, there is a long exact sequence
    \begin{equation}\label{eq:skein-second}
               \cdots\to \KHI(Y,K_{+}) \to \KHI(Y,K_{-}) \to
               \KHI(Y,K_{0})\otimes V^{\otimes 2} \to
                \cdots
    \end{equation}
    where $V$ a 2-dimensional vector space arising as
    the instanton Floer homology of the sutured manifold
    $(M,\gamma)$, with $M$ the solid torus $S^{1}\times D^{2}$
    carrying four parallel sutures $S^{1}\times \{p_{i}\}$ for four
    points $p_{i}$ on $\partial D^{2}$ carrying alternating
    orientations.
\end{theorem}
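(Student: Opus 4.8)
The plan is to deduce the long exact sequence from Floer's surgery exact triangle in instanton homology, transported into the language of closures. Since $K_{+}$, $K_{-}$ and $K_{0}$ coincide outside the ball $B$, their sutured complements coincide outside $B$, and I would choose the longitudes and the auxiliary surface $F_{r}$ to be supported away from $B$, so that the three closures $Z_{+}$, $Z_{-}$, $Z_{0}$ of section~\ref{subsec:inst-homology-link} share a common exterior $N$ with torus boundary $T$, where $T$ is the boundary of a tubular neighborhood of the crossing region inside $B$. The three tangles then correspond to three slopes on $T$ with pairwise geometric intersection number $1$, so that the three Dehn fillings of $(N,T)$ recover $Z_{+}$, $Z_{-}$ and $Z_{0}$. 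The line bundle $w$ and the torus $\bar{R}=\nu\times S^{1}$ can be taken to lie in $N$, and hence restrict to each filling.

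With this configuration in hand, Floer's exact triangle (see \cite{Floer-Durham-paper} and \cite{Braam-Donaldson}) applies to the three fillings of $(N,T)$ and produces a long exact sequence
\[
   \cdots \to I_{*}(Z_{+})_{w} \to I_{*}(Z_{-})_{w} \to I_{*}(Z_{0})_{w} \to \cdots,
\]
whose maps are induced by the elementary surgery cobordisms. These cobordisms are products over the complement of a neighborhood of $T$; in particular they contain product regions $\bar{R}\times[0,1]$ and $y\times[0,1]$, so the induced maps commute with $\mu(\bar{R})$ and $\mu(y)$. I would then restrict the entire sequence to the generalized eigenspace $I_{*}(Z|\bar{R})_{w}=I_{*}(Z)_{w,+2}$; recall that $\bar{R}$ is a torus, so only the eigenvalue $2$ of $\mu(y)$ is at issue. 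Since exactness passes to a common generalized eigenspace of a commuting operator, this yields a long exact sequence relating the groups $I_{*}(Z_{\bullet})_{w,+2}$.

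It then remains to identify these eigenspaces with the link homologies. The terms $I_{*}(Z_{\pm})_{w,+2}$ come from standard closures of the $r$-component links $K_{\pm}$ and give $\KHI(Y,K_{\pm})$ directly, so the only delicate point is the term $I_{*}(Z_{0})_{w,+2}$, because $N$ was built to close up an $r$-component link whereas $K_{0}$ has a different number of components. When the two strands of $K_{+}$ lie on the \emph{same} component, the $0$-resolution splits it and $K_{0}$ has one more component; invoking Lemma~\ref{lem:relaxed-independence} to accommodate the disconnected auxiliary surface that now appears, one checks that $Z_{0}$ is again a closure of the complement of $K_{0}$, so that $I_{*}(Z_{0})_{w,+2}\cong\KHI(Y,K_{0})$ and the restricted triangle is \eqref{eq:skein-first}.

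The main obstacle is the remaining case, in which the two strands belong to \emph{different} components, so that the $0$-resolution merges them and $K_{0}$ has one fewer component. Here the closure $Z_{0}$ inherited from $N$ is genuinely not a closure of the complement of $K_{0}$ in the standard sense: the merged peripheral region carries extra suture data, and comparing $Z_{0}$ with the preferred closure of $K_{0}$ introduces sutured solid-torus regions, each a copy of $(M,\gamma)$ with four alternating sutures. I would make this precise through an excision/product computation for $\SHI$---again leaning on Lemma~\ref{lem:relaxed-independence}---showing that gluing in such a piece multiplies the invariant by a tensor factor, with net contribution $V^{\otimes 2}$, where $V=\SHI(M,\gamma)$ is the stated $2$-dimensional space. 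This gives $I_{*}(Z_{0})_{w,+2}\cong\KHI(Y,K_{0})\otimes V^{\otimes 2}$, and hence \eqref{eq:skein-second}. Establishing this product formula for $\SHI$, confirming the exponent, and verifying its compatibility with the restriction to the $\mu(y)=2$ eigenspace is where the real work lies.
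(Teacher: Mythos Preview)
Your overall strategy---apply Floer's surgery triangle to closures and then identify the third term---is the same as the paper's, but the execution has two genuine gaps.

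First, your setup of the torus $T$ does not work as stated. The three tangle complements inside $B$ are genus-$2$ handlebodies glued along a $4$-punctured sphere, not solid tori glued along a torus, so the three preferred closures $Z_{+}$, $Z_{-}$, $Z_{0}$ are \emph{not} three Dehn fillings of a common manifold with a single torus boundary. The paper's remedy is to introduce an auxiliary unknotted circle $\lambda$ encircling the two strands of $K_{+}$ with linking number zero. One then takes $N = Z_{+}\setminus N^{\circ}(\lambda)$, so that $T=\partial N(\lambda)$, and the $\infty$-, $(-1)$-, and $0$-fillings give $Z_{+}$, $Z_{-}$, and a third manifold $Z_{0}$. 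Crucially, this $Z_{0}$ is \emph{not} the preferred closure of the complement of $K_{0}$: it is a closure of a sutured manifold $(M_{0},\gamma_{0})$ obtained by $0$-surgery on $\lambda$ inside the sutured complement of $K_{+}$. The passage from $(M_{0},\gamma_{0})$ to the sutured complement of $K_{0}$ goes through an explicit product-annulus decomposition (the annulus is the union of the twice-punctured disk bounded by $\lambda$ and a meridian disk of the surgery torus). Your appeal to Lemma~\ref{lem:relaxed-independence} is not enough by itself; one needs this decomposition first, and in the second case two further separating-annulus decompositions to strip the resulting six meridional sutures on one boundary torus down to two, each step peeling off one copy of the four-sutured solid torus. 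That is where the exponent $2$ in $V^{\otimes 2}$ comes from.

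Second, you do not prove that $V$ is $2$-dimensional, which is part of the theorem's assertion. The paper does this by exhibiting an admissible closure of two disjoint copies of the four-sutured solid torus as $\Sigma_{2}\times S^{1}$ (with $w$ dual to a curve on $\Sigma_{2}$), so that $V\otimes V \cong I_{*}(\Sigma_{2}\times S^{1})_{w,+2}$, and then invoking the Braam--Donaldson computation of the representation variety (two Morse--Bott $2$-tori) together with the nonvanishing of $I_{*}(\Sigma_{2}\times S^{1}|\Sigma_{2})_{w}$ to conclude that this eigenspace is $4$-dimensional. Without this step your argument is incomplete.
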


\begin{proof}
    Let $\lambda$ be a standard circle in the complement of $K_{+}$
    which encircles the two strands of $K_{+}$ with total linking
    number zero, as shown in Figure~\ref{fig:K-plus-with-lambda}.
\begin{figure}
    \begin{center}
        \includegraphics[scale=0.6]{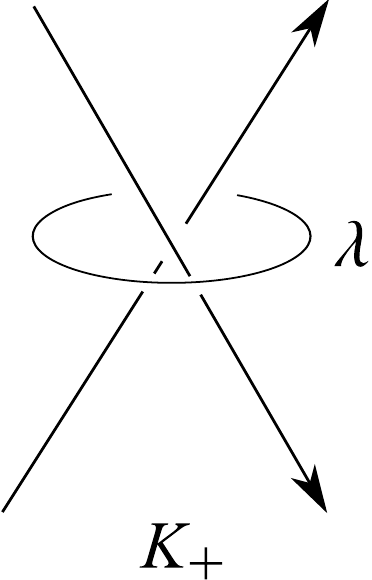}
    \end{center}
    \caption{\label{fig:K-plus-with-lambda}
    The knot $K_{+}$, with a standard circle $\lambda$ around a
    crossing, with linking number zero.}
\end{figure}
    Let
    $Y_{-}$ and $Y_{0}$ be the $3$-manifolds obtained from $Y$ by
    $-1$-surgery and $0$-surgery on $\lambda$ respectively. Since
    $\lambda$ is disjoint from $K_{+}$, a copy of $K_{+}$ lies in
    each, and we have new pairs $(Y_{-1},K_{+})$ and $(Y_{0},K_{+})$.
    The pair $(Y_{-1},K_{+})$ can be identified with $(Y,K_{-})$.
\begin{figure}
    \begin{center}
        \includegraphics[scale=0.6]{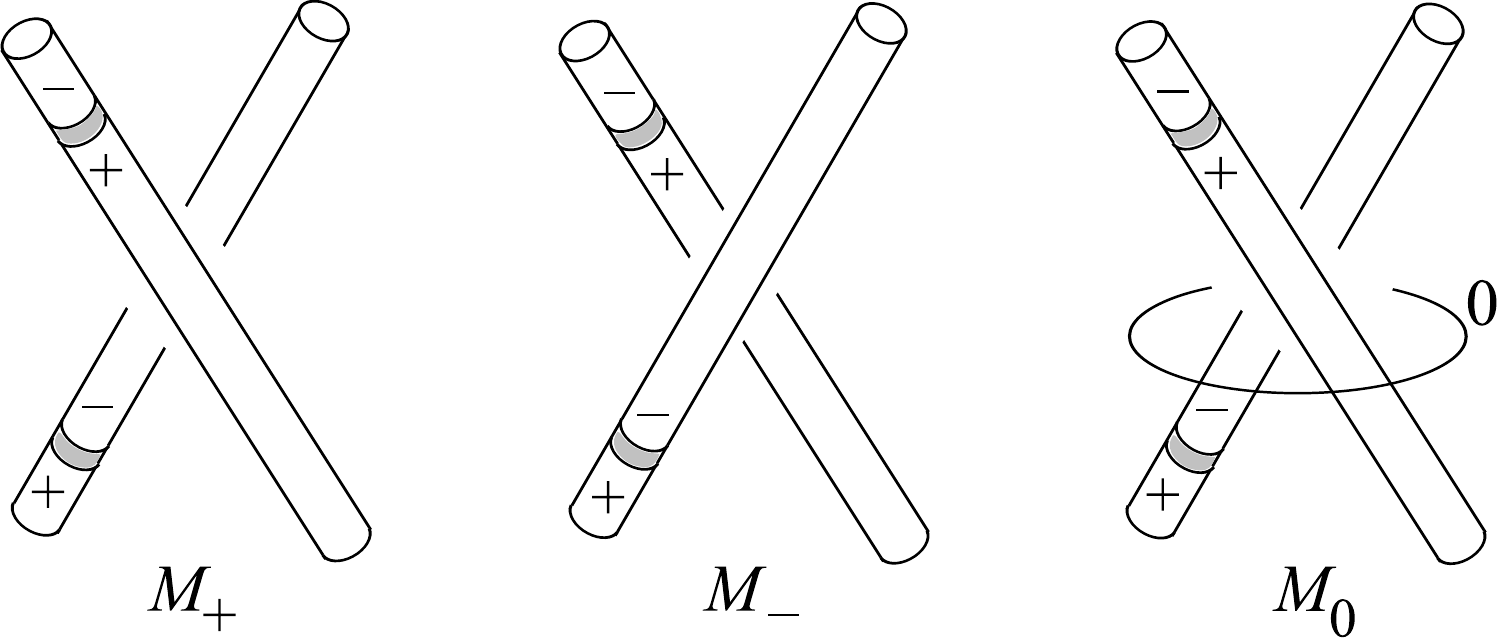}
    \end{center}
    \caption{\label{fig:Skein-Tubes}
    Sutured manifolds obtained from the knot complement, related by a
    surgery exact triangle.}
\end{figure}

    Let
    $(M_{+},\gamma_{+})$, $(M_{-},\gamma_{-})$ and
    $(M_{0},\gamma_{0})$ be the sutured manifolds associated to the
    links $(Y,K_{+})$, $(Y,K_{-})$ and $(Y_{0},K_{0})$ respectively:
    that is, $M_{+}$, $M_{-}$ and $M_{0}$ are the link complements of
    $K_{+}\subset Y$, $K_{-}\subset Y$ and $K_{0}\subset Y_{0}$
    respectively, and there are two sutures on each boundary
    component. (See Figure~\ref{fig:Skein-Tubes}.)
    The sutured manifolds $(M_{-},\gamma_{-})$ and
    $(M_{0}, \gamma_{0})$ are obtained from $(M_{+},\gamma_{+})$ by
    $-1$-surgery and $0$-surgery respectively on the circle
    $\lambda\subset M_{+}$. If $(Z,\bar{R})$ is any admissible closure
    of $(M_{+},\gamma_{+})$ then surgery on $\lambda\subset Z$ yields
    admissible closures for the other two sutured manifolds. From
    Floer's surgery exact triangle \cite{Braam-Donaldson}, it follows
    that there is a long exact sequence
    \begin{equation}\label{eq:SHI-long-exact}
               \cdots\to \SHI(M_{+},\gamma_{+}) \to
               \SHI(M_{-},\gamma_{-}) \to
               \SHI(M_{0},\gamma_{0}) \to
                \cdots
    \end{equation}
     in which the maps are induced by surgery cobordisms between
     admissible closures of the sutured manifolds.

     By definition, we have
     \[
\begin{aligned}
    \SHI(M_{+},\gamma_{+}) &= \KHI(Y,K_{+}) \\
    \SHI(M_{-},\gamma_{-}) &= \KHI(Y,K_{-}) .
\end{aligned}
     \]
     \begin{figure}
    \begin{center}
        \includegraphics[scale=0.6]{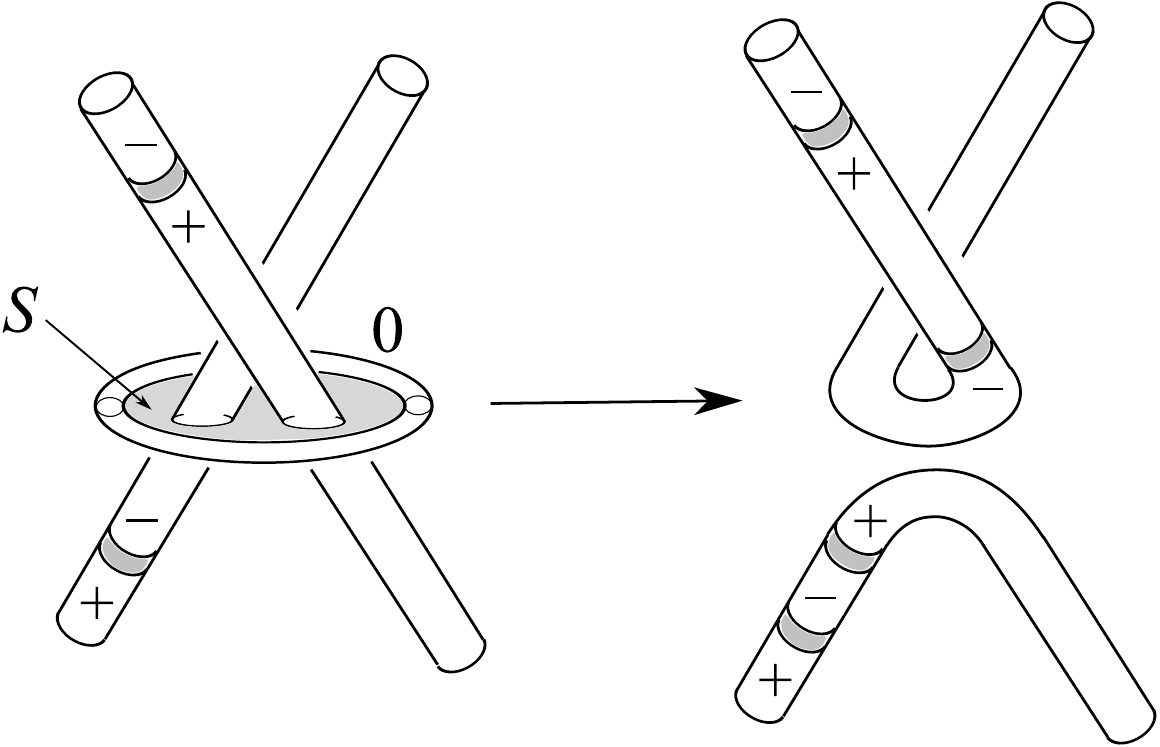}
    \end{center}
    \caption{\label{fig:Decompose-M0}
    Decomposing $M_{0}$ along a product annulus to obtain a link
    complement in $S^{3}$.}
\end{figure}%
    However, the situation for $(M_{0}, \gamma_{0})$ is a little
    different. The manifold $M_{0}$ is obtained by zero-surgery on the
    circle $\lambda$ in $M_{+}$, as indicated in
    Figure~\ref{fig:Skein-Tubes}. This sutured manifold contains a
    product annulus $S$, consisting of the union of the
    twice-punctured disk shown in Figure~\ref{fig:Decompose-M0} and
    a disk $D^{2}$ in the surgery solid-torus $S^{1}\times D^{2}$. As
    shown in the figure, sutured-manifold decomposition along the
    annulus $S$ gives a sutured manifold $(M'_{0},\gamma'_{0})$ in
    which $M'_{0}$ is the link complement of $K_{0}\subset Y$:
    \[
                (M_{0},\gamma_{0}) \decomp{S} (M'_{0}, \gamma'_{0}).
    \]
    By Proposition~6.7 of \cite{KM-sutures} (as adapted to the
    instanton homology setting in section~7.5 of that paper), we
    therefore have an isomorphism
    \[
                 \SHI (M_{0},\gamma_{0})\cong \SHI (M'_{0},
                 \gamma'_{0}).
    \]

    We now have to separate cases according to the number of
    components of $K_{+}$ and $K_{0}$. If the two strands of $K_{+}$
    at the crossing belong to the same component, then every component
    of $\partial M'_{0}$ contains exactly two, oppositely-oriented
    sutures, and we therefore have
    \[
                    \SHI (M'_{0},
                 \gamma'_{0}) = \KHI(Y, K_{0}).
    \]
    In this case, the sequence \eqref{eq:SHI-long-exact} becomes the
    sequence in the first case of the theorem.

    \begin{figure}
    \begin{center}
        \includegraphics[scale=0.7]{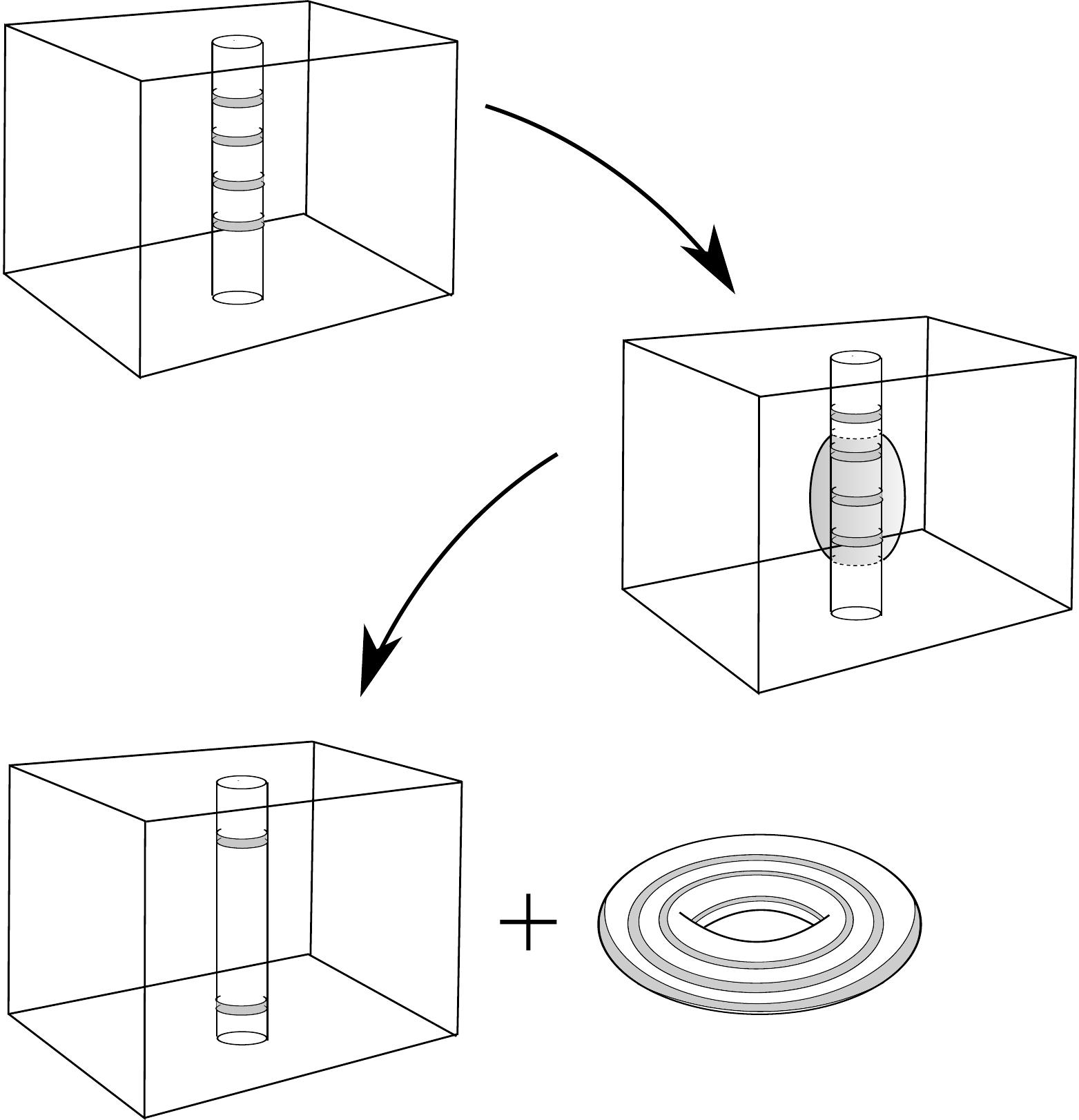}
    \end{center}
    \caption{\label{fig:Remove-Sutures}
    Removing some extra sutures using a decomposition along a product
    annulus. The solid torus in the last step has four
    sutures.}
\end{figure}

    If the two strands of $K_{+}$ belong to different components, then
    the corresponding boundary components of $M_{+}$ each carry two
    sutures. These two boundary components become one boundary
    component in $M'_{0}$, and the decomposition along $S$ introduces
    two new sutures; so the resulting boundary component in $M'_{0}$
    carries six meridional sutures, with alternating orientations.
    Thus $(M'_{0}, \gamma'_{0})$ fails to be the sutured manifold
    associated to the link $K_{0}\subset Y$, on account of having four
    additional sutures. As shown in Figure~\ref{fig:Remove-Sutures}
    however, the number of  sutures on a torus boundary component can
    always be reduced by $2$ (as long as there are at least four to
    start with) by using a decomposition along a  separating annulus.
    This decomposition results in a manifold with one additional
    connected component, which is a solid torus with four longitudinal
    sutures. This operation needs to be performed twice to reduce the
    number of sutures in $M'_{0}$ by four, so we obtain two copies of
    this solid torus. Denoting by $V$ the Floer homology of this
    four-sutured solid-torus, we therefore have
    \[
                 \SHI (M'_{0},
                 \gamma'_{0}) = \KHI(Y, K_{0})\otimes V\otimes V
    \]
    in this case. Thus the sequence \eqref{eq:SHI-long-exact} becomes
    the second long exact sequence in the theorem.

    At this point, all that remains is to show that $V$ is
    $2$-dimensional, as asserted in the theorem. We will do this
    indirectly, by identifying $V\otimes V$ as a $4$-dimensional
    vector space. Let $(M_{4},\gamma_{4})$ be the sutured solid-torus
    with $4$ longitudinal sutures, as described above, so that
    $\SHI(M_{4},\gamma_{4})=V$. Let $(M,\gamma)$ be two disjoint
    copies of $(M_{4},\gamma_{4})$,
    so that
    \[
                    \SHI(M,\gamma) = V\otimes V.
    \]

    We can describe an admissible closure of $(M,\gamma)$ (with a
    disconnected $T$ as in section~\ref{subsec:disconnected-T}) by
    taking $T$ to be four annuli: we attach $[-1,1]\times T$ to
    $(M,\gamma)$ to form $\bar{M}$ so that $\bar{M}$ is $\Sigma\times
    S^{1}$ with $\Sigma$ a four-punctured sphere. Thus
    $\partial\bar{M}$ consists of four tori, two of which belong to
    $\bar{R}_{+}$ and two to $\bar{R}_{-}$. The closure $(Y,\bar{R})$
    is obtained by gluing the tori in pairs; and this can be done so
    that $Y$ has the form $\Sigma_{2}\times S^{1}$, where $\Sigma_{2}$
    is now a closed surface of genus $2$. The surface $\bar{R}$ in
    $\Sigma_{2}\times S^{1}$ has the form $\gamma\times S^{1}$, where
    $\gamma$ is a union of two disjoint closed curves in independent
    homology classes. The line bundle $w$ has $c_{1}(w)$ dual to
    $\gamma'$, where $\gamma'$ is a curve on $\Sigma_{2}$ dual to one
    component of $\gamma$.

    Thus we can identify $V\otimes V$ with the generalized eigenspace
    of $\mu(y)$
    belonging to the eigenvalue $+2$ in the Floer homology
    $I_{*}(\Sigma_{2}\times S^{1})_{w}$, 
    \begin{equation}
        \label{eq:VVisSigma2}
        V\otimes V = I_{*}(\Sigma_{2} \times S^{1})_{w,+2},
    \end{equation}
where $w$ is dual to a curve
    lying on $\Sigma_{2}$. Our next task is therefore to identify this
    Floer homology group. This was done (in slightly different
    language) by Braam and Donaldson
    \cite[Proposition~1.15]{Braam-Donaldson}.  The
    main point is to identify the relevant representation variety in
    $\bonf(Y)_{w}$, for which we quote:

    \begin{lemma}[{\cite{Braam-Donaldson}}]
   \label{lem:Sigma2-calc}
        For $Y=\Sigma_{2}\times S^{1}$ and $w$ as above,
        the critical-point set of the Chern-Simons functional in
        $\bonf(Y)_{w}$ consists of two disjoint $2$-tori. Furthermore,
        the Chern-Simons functional is of Morse-Bott type along its
        critical locus. \qed
    \end{lemma}

        To continue the calculation, following
        \cite{Braam-Donaldson}, it now follows from the
        lemma that $I_{*}(\Sigma_{2}\times S^{1})_{w}$ has dimension at most
        $8$ and that the even and odd parts of this Floer group, with
        respect to the relative mod 2 grading, have equal dimension:      
        each at most $4$. On the other hand, the group
        $I_{*}(\Sigma_{2}\times S^{1}| \Sigma_{2})_{w}$ is non-zero.
        So the generalized eigenspaces belonging to the
        eigenvalue-pairs $((-1)^{r}2, i^{r}2)$, for $r=0,1,2,3$, are
        all non-zero. Indeed, each of these generalized eigenspaces is
        $1$-dimensional, by Proposition~7.9 of \cite{KM-sutures}.
        These four 1-dimensional generalized eigenspaces all belong
        to the same relative mod-2 grading. It follows that
        $I_{*}(\Sigma_{2}\times S^{1})_{w}$  is 8-dimensional, and can
        be identified as a vector space with the homology of the
        critical-point set.  The generalized eigenspace belonging to
        $+2$ for the operator $\mu(y)$ is therefore $4$-dimensional;
        and this is $V\otimes V$. This completes the argument.
    \end{proof}

\subsection{Tracking the mod 2 grading}

Because we wish to examine the Euler characteristics, we need to know
how the canonical mod 2 grading behaves under the maps in
Theorem~\ref{thm:skein}. This is the content of the next lemma.

\begin{lemma}\label{lem:mod-2-sequence}
    In the situation of Theorem~\ref{thm:skein}, suppose that the link
    $K_{+}$ is null-homologous (so that $K_{-}$ and $K_{0}$ are
    null-homologous also). Let $\Sigma_{+}$ be a Seifert surface for
    $K_{+}$, and let $\Sigma_{-}$ and $\Sigma_{0}$ be Seifert surfaces
    for the other two links, obtained from $\Sigma_{+}$ by a
    modification in the neighborhood of the crossing. Equip the
    instanton knot homology groups of these links with their canonical
    mod $2$ gradings, as determined by the preferred closures arising
    from these Seifert surfaces.  
    Then in the first case of the two cases of the
    theorem, the map from  $\KHI(Y,K_{-})$ to $\KHI(Y,K_{0})$ in the
    sequence \eqref{eq:skein-first} has odd degree, while the other
    two maps have even degree, with respect to the canonical mod 2
    grading.

    In the second case, if we grade the 4-dimensional vector space
    $V\otimes V$ by identifying it with $I_{*}(\Sigma_{2}\times
    S^{1})_{w,+2}$ as in \eqref{eq:VVisSigma2}, then the map from
      $\KHI(Y,K_{0})\otimes V^{\otimes 2}$ to $\KHI(Y,K_{+})$
     in \eqref{eq:skein-second}
    has odd degree,  while the other
    two maps have even degree.
\end{lemma}

\begin{proof}
    We begin with the first case.  Let $Z_{+}$ be the preferred
    closure of the sutured knot complement $(M_{+},\gamma_{+})$
    obtained from the knot $K_{+}$, as defined by
    \eqref{eq:special-closure}.  In the notation of the proof of
    Theorem~\ref{thm:skein}, the curve $\lambda$ lies in $Z_{+}$. Let
    us write $Z_{-}$ and $Z_{0}$ for the manifolds obtained from
    $Z_{+}$ by $-1$-surgery and $0$-surgery on $\lambda$ respectively.
    It is a straightforward observation that $Z_{-}$ and $Z_{0}$ are
    respectively the preferred closures of the sutured complements of
    the links $K_{-}$ and $K_{0}$.  The surgery cobordism $W$ from
    $Z_{+}$ to $Z_{-}$ gives rise to the map from $\KHI(Y,K_{+})$ to
    $\KHI(Y,K_{-})$.  This $W$ has the same homology as the cylinder
    $[-1,1]\times Z_{+}$ blown up at a single point. The quantity
    $\iota(W)$ in \eqref{eq:iota-W} is therefore even, and it follows
    that the map
    \[
              \KHI(Y,K_{+}) \to \KHI(Y,K_{-})      
    \]
    has even degree. The surgery cobordism $W_{0}$ induces a map
    \begin{equation}\label{eq:second-cobordism}
                    I_{*}(Z_{-})_{w} \to I_{*}(Z_{0})_{w}
    \end{equation}
    which has odd degree, by another application of \eqref{eq:iota-W}.
    This concludes the proof of the first case.
     
    In the second case of the theorem,
    we still have a long exact
    sequence
    \[
          \to I_{*}(Z_{+})_{w}  \to     I_{*}(Z_{-})_{w} \to
          I_{*}(Z_{0})_{w}     \to
     \]
     in which the map $I_{*}(Z_{-})_{w} \to I_{*}(Z_{0})_{w}$ is
     odd and the other two are even. 
    However, it is no longer true that the manifold $Z_{0}$ is
     the preferred closure of the sutured manifold obtained from
     $K_{0}$. The manifold $Z_{0}$ can be described as being obtained
     from the complement of $K_{0}$ by attaching $G_{r}\times S^{1}$,
     where $G_{r}$ is a surface of genus $2$ with $r$ boundary
     components. Here $r$ is the number of components of $K_{0}$, and
     the attaching is done as before, so that the curves 
      $\partial G_{r}\times
     \{q\}$ is attached to the longitudes and the curves 
     $\{p_{i}\}\times S^{1}$
     are attached to the meridians. The \emph{preferred} closure, on
     the other hand, is defined using a surface $F_{r}$ of genus
     $1$, not genus $2$. We write $Z'_{0}$ for the preferred closure, and our
     remaining task is to compare the instanton Floer homologies of
     $Z_{0}$ and $Z'_{0}$, with their canonical $\Z/2$ gradings.

     An application of Floer's excision theorem provides an
     isomorphism
     \[
                 I_{*}(Z_{0})_{w,+2} \to I_{*}(Z'_{0})_{w,+2} \otimes
                 I_{*}(\Sigma_{2}\times S^{1})_{w,+2}
      \]
     where (as before) the class $w$ in the last term is dual to a
     non-separating curve in the genus-2 surface $\Sigma_{2}$. 
    \begin{figure}
    \begin{center}
        \includegraphics[scale=0.4]{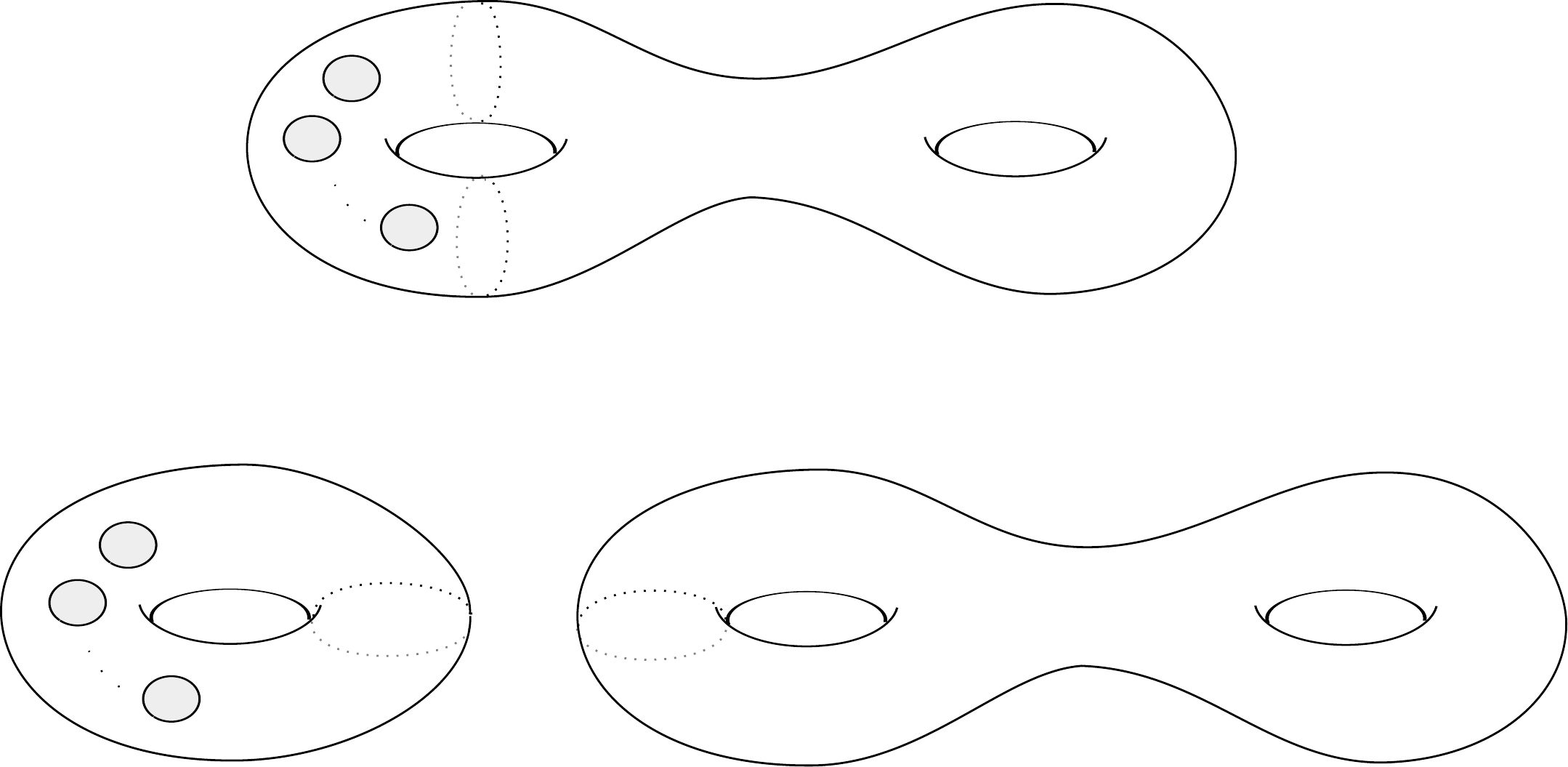}
    \end{center}
    \caption{\label{fig:F-and-G}
    The surfaces $G_{r}$ and $F_{r} \amalg \Sigma_{2}$, used in constructing $Z_{0}$
    and $Z'_{0}$ respectively.}
\end{figure}
    (See
     Figure~\ref{fig:F-and-G} which depicts the excision cobordism
     from $G_{r}\times S^{1}$ to $(F_{r}\amalg \Sigma_{2})\times
     S^{1}$, with the $S^{1}$ factor suppressed.) The
     isomorphism is realized by an explicit cobordism $W$, with
     $\iota(W)$ odd, which accounts for the difference between the
     first and second cases and concludes the proof.
\end{proof}

\subsection{Tracking the eigenspace decomposition}

The next lemma is similar in spirit to Lemma~\ref{lem:mod-2-sequence},
but deals with eigenspace decomposition rather than the mod $2$
grading.

\begin{lemma}\label{lem:eigenspace-sequence}
    In the situation of Theorem~\ref{thm:skein}, suppose again that
    the links
    $K_{+}$, $K_{-}$ and $K_{0}$ are
    null-homologous. Let $\Sigma_{+}$ be a Seifert surface for
    $K_{+}$, and let $\Sigma_{-}$ and $\Sigma_{0}$ be Seifert surfaces
    for the other two links, obtained from $\Sigma_{+}$ by a
    modification in the neighborhood of the crossing.
    Then in the first case of the two cases of the theorem, the
    maps in the long exact
    sequence \eqref{eq:skein-first} intertwine the three operators
    $\mu^{o}([\Sigma_{+}])$, $\mu^{o}([\Sigma_{-}])$ and
    $\mu^{o}([\Sigma_{0}])$. In particular then, we have a long exact
    sequence
    \begin{equation*}
               \to \KHI(Y,K_{+},[\Sigma_{+}],j) \to
               \KHI(Y,K_{-},[\Sigma_{-}],j) \to
               \KHI(Y,K_{0},[\Sigma_{0}],j) \to
    \end{equation*}
    for every $j$.

    In the second case of Theorem~\ref{thm:skein}, the maps in the
    long exact sequence \eqref{eq:skein-second} intertwine the
    operators $\mu^{o}([\Sigma_{+}])$ and $\mu^{o}([\Sigma_{-}])$ on
    the first two terms with the operator
    \[
                        \mu^{o}([\Sigma_{0}]) \otimes 1 +
                       1 \otimes  \mu([\Sigma_{2}])
    \]
    acting on
    \[
                \KHI(Y,K_{0})\otimes I_{*}(\Sigma_{2}\times
                S^{1})_{w,+2}\cong \KHI(Y,K_{0})\otimes V^{\otimes 2}.
    \]
\end{lemma}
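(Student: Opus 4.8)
The plan is to deduce the whole lemma from one principle: a cobordism map in instanton Floer homology commutes with the operator $\mu(\sigma)$ attached to any $2$-dimensional homology class $\sigma$ that extends across the cobordism. All the maps in \eqref{eq:skein-first} and \eqref{eq:skein-second} are induced, at the level of $I_{*}(Z)_{w,+2}$, by the surgery cobordisms between the preferred closures $Z_{+}$, $Z_{-}$, $Z_{0}$ constructed in the proofs of Theorem~\ref{thm:skein} and Lemma~\ref{lem:mod-2-sequence}. Since $\mu^{o}([\Sigma])$ is nothing but the restriction of $\mu(\bar\Sigma)$ to the summand $I_{*}(Z)_{w,+2}=\KHI(Y,K)$, it suffices to exhibit, in each surgery cobordism $W$ between two of these closures, a class $\hat\sigma\in H_{2}(W)$ whose restrictions to the two ends are the closed Seifert-surface classes $[\bar\Sigma_{\bullet}]$; the cobordism map then intertwines $\mu(\bar\Sigma_{\bullet})$, and restricting to the $(+2)$-eigenspace of $\mu(y)$ (which these maps preserve, since the point class extends trivially) intertwines $\mu^{o}([\Sigma_{\bullet}])$. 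An operator-intertwining map carries each generalized eigenspace into the corresponding one, which is precisely what yields the eigenvalue-by-eigenvalue long exact sequence in the first case.

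The construction of this extending class $\hat\sigma$ is the step I expect to be the main obstacle. Here I would use that $\lambda$ has linking number zero with $K_{+}$: this lets us arrange the closed surfaces $\bar\Sigma_{+}$, $\bar\Sigma_{-}$, $\bar\Sigma_{0}$ to coincide outside a neighborhood of the crossing and to differ only by the local skein modification inside it. Away from $\lambda$ the surgery cobordism is a product, so the product surface over the common part already extends $\bar\Sigma_{+}$; what must be checked is that near $\lambda$ the two local pictures of the Seifert surface close up into a single interpolating surface threading the surgery handle. The vanishing of $\mathrm{lk}(\lambda,K_{+})$ is exactly the condition that makes $\lambda$ homologically inessential against the Seifert surface, so that such an interpolation exists and $\hat\sigma$ is defined. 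Granting this, the first case is finished.

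For the second case the map $\KHI(Y,K_{+})\to\KHI(Y,K_{-})$ is handled identically, giving the intertwining of $\mu^{o}([\Sigma_{+}])$ and $\mu^{o}([\Sigma_{-}])$. The remaining work is to identify the operator on the $K_{0}$ term. Recall from the proof of Lemma~\ref{lem:mod-2-sequence} that the closure produced by surgery is $Z_{0}$, built with the genus-$2$ surface $G_{r}$, and that Floer excision supplies the isomorphism $I_{*}(Z_{0})_{w,+2}\cong I_{*}(Z_{0}')_{w,+2}\otimes I_{*}(\Sigma_{2}\times S^{1})_{w,+2}$, where $Z_{0}'$ is the preferred closure built with $F_{r}$. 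The surgery cobordism $I_{*}(Z_{-})_{w}\to I_{*}(Z_{0})_{w}$ intertwines $\mu(\bar\Sigma_{-})$ with $\mu(\bar\Sigma_{0})$, where $\bar\Sigma_{0}=\Sigma_{0}\cup(G_{r}\times\{q\})$. Under the excision cobordism the slice $G_{r}\times\{q\}$ is carried to $(F_{r}\amalg\Sigma_{2})\times\{q\}$ (compare Figure~\ref{fig:F-and-G}), so $\bar\Sigma_{0}$ is carried to the disjoint union $\bar\Sigma_{0}'\amalg\Sigma_{2}$, with $\bar\Sigma_{0}'=\Sigma_{0}\cup(F_{r}\times\{q\})$ lying in $Z_{0}'$ and $\Sigma_{2}$ the slice in $\Sigma_{2}\times S^{1}$.

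Finally, because $\mu$ of a disjoint union of surfaces acts on the tensor-product Floer homology as $\mu(A)\otimes 1+1\otimes\mu(B)$ (the disjoint-union behavior fixed in section~\ref{subsec:mod-2-grading}), the operator $\mu(\bar\Sigma_{0})$ corresponds under excision to $\mu(\bar\Sigma_{0}')\otimes 1+1\otimes\mu(\Sigma_{2})$. Restricting the first factor to $I_{*}(Z_{0}')_{w,+2}=\KHI(Y,K_{0})$ turns $\mu(\bar\Sigma_{0}')$ into $\mu^{o}([\Sigma_{0}])$, yielding exactly the operator $\mu^{o}([\Sigma_{0}])\otimes 1+1\otimes\mu([\Sigma_{2}])$ named in the statement. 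Composing the surgery intertwining with this excision identification then shows that the maps of \eqref{eq:skein-second} intertwine the three operators as claimed. The only point still needing care is the verification that the excision cobordism carries the surface slice as asserted; this is a matching of Euler characteristics ($\chi(G_{r})=\chi(F_{r})+\chi(\Sigma_{2})$) together with a check that the cutting tori are disjoint from the slice, both visible from Figure~\ref{fig:F-and-G}.
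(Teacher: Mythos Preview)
Your proposal is correct and follows the same approach as the paper: the maps arise from cobordisms between the preferred closures, and the closed Seifert surfaces $\bar\Sigma_{\bullet}$ are homologous in those cobordisms, so the induced maps intertwine the $\mu$-operators. The paper's proof is a three-sentence summary of exactly this argument; you have supplied the details the paper omits, including the explicit tracking of the surface class through the excision cobordism in the second case. One minor point: your citation of section~\ref{subsec:mod-2-grading} for the tensor-product behavior of $\mu$ on a disjoint union is misplaced, as that section addresses the mod~$2$ grading rather than the $\mu$-maps; the fact you need is the standard additivity of $\mu$ under disjoint union, which is immediate from its definition.
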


\begin{proof}
    The operator $\mu^{o}([\Sigma])$ on the knot homology groups is
    defined in terms of the action of $\mu([\bar\Sigma])$ for a
    corresponding closed surface $\bar\Sigma$ in the preferred closure
    of the link complement. The maps in the long exact sequences arise
    from cobordisms between the preferred closures. The lemma follows
    from the fact that the corresponding closed surfaces are
    homologous in these cobordisms.
\end{proof}

\subsection{Proof of the main theorem}

For a null-homologous link $K \subset Y$ with a chosen Seifert surface
$\Sigma$, let us write
\[
\begin{aligned}
\chi (Y,K,[\Sigma])&= \sum_{j}
            \chi(\KHI(Y,K,[\Sigma],j))t^{j} \\
             &= \sum_{j} \bigl ( \dim\KHI_{0}(Y,K,[\Sigma],j) -
             \dim\KHI_{1}(Y,K,[\Sigma],j)\bigr) t^{j} \\
            &= \str( t^{\mu^{o}(\Sigma)/2}),
\end{aligned}
\]
where $\str$ denotes the alternating trace.
If $K_{+}$, $K_{-}$ and $K_{0}$ are three skein-related links with
corresponding Seifert surfaces $\Sigma_{+}$, $\Sigma_{-}$ and
$\Sigma_{0}$, then Theorem~\ref{thm:skein}, Lemma~\ref{lem:mod-2-sequence} and
Lemma~\ref{lem:eigenspace-sequence} together tell us that we have the
relation
\[
            \chi (Y,K_{+},[\Sigma_{+}]) -  \chi (Y,K_{-},[\Sigma_{-}]) +  \chi
            (Y,K_{0},[\Sigma_{0}]) = 0 
\]
in the first case of Theorem~\ref{thm:skein}, and
\[
            \chi (Y,K_{+},[\Sigma_{+}]) -  \chi (Y,K_{-},[\Sigma_{-}]) -  \chi
            (Y,K_{0},[\Sigma_{0}]) r(t) = 0 
\]
in the second case. Here $r(t)$ is the contribution from the term
$I_{*}(\Sigma_{2}\times S^{1})_{w,+2}$, so that
\[
                r(t) = \str (t^{\mu([\Sigma_{2}])/2}).
\]
From the proof of Lemma~\ref{lem:Sigma2-calc} we can read off the
eigenvalues of $[\Sigma_{2}]/2$: they are $1$, $0$ and $-1$, and the
$\pm 1$ eigenspaces are each $1$-dimensional. Thus
\[
               r(t) = \pm (t - 2 + t^{-1}).
\]

To determine the sign of $r(t)$, we need to know the canonical $\Z/2$
grading of (say) the $0$-eigenspace of $\mu([\Sigma_{2}])$ in
$I_{*}(\Sigma_{2}\times S^{1})_{w,+2}$. The trivial $3$-dimensional
cobordism from $T^{2}$ to $T^{2}$ can be decomposed as $N^{+}\cup
N^{-}$, where $N^{+}$ is a cobordism from $T^{2}$ to $\Sigma_{2}$ and
$N_{-}$ is a cobordism the other way. The $4$-dimensional cobordisms
$W^{\pm}= N^{\pm}\times S^{1}$ induce isomorphisms on the
$0$-eigenspace  of $\mu([T^{2}])=\mu([\Sigma_{2}])$; and
$\iota(W^{\pm})$ is odd. Since the generator for $T^{3}$ is in odd
degree, we conclude that the $0$-eigenspace of $\mu([\Sigma_{2}])$ is
in even degree, and that
\[
\begin{aligned}
    r(t) &= - (t - 2 + t^{-1}) \\
         &= - q(t)^{2}
\end{aligned}
\]
where
\[
         q(t)  = (t^{1/2}-t^{-1/2}).
\]

We can roll the two case of Theorem~\ref{thm:skein} into one by
defining the ``normalized'' Euler characteristic as
\begin{equation}\label{eq:renormalized}
                \tilde\chi(Y,K,[\Sigma]) =
                    q(t)^{1-r}\chi(Y,K,[\Sigma])
\end{equation}
where $r$ is the number of components of the link $K$. With this
notation we have:

\begin{proposition}
    For null-homologous skein-related links $K_{+}$, $K_{-}$ and
    $K_{0}$ with corresponding Seifert surface $\Sigma_{+}$,
    $\Sigma_{-}$ and $\Sigma_{0}$, the normalized Euler
    characteristics \eqref{eq:renormalized} satisfy
    \[
                 \tilde \chi (Y,K_{+},[\Sigma_{+}]) - \tilde \chi
            (Y,K_{-},[\Sigma_{-}])=   (t^{1/2}-t^{-1/2})\,\tilde \chi
            (Y,K_{0},[\Sigma_{0}]).    
    \]
    \qed
\end{proposition}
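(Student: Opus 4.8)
The plan is to treat the proposition as a purely formal consequence of the two skein relations for the graded Euler characteristic $\chi$ already obtained above (one for each case of Theorem~\ref{thm:skein}), together with the computed value $r(t) = -q(t)^2$. The entire purpose of the normalization $\tilde\chi(Y,K,[\Sigma]) = q(t)^{1-r}\chi(Y,K,[\Sigma])$ is to absorb the difference between the two cases into a single identity, so I would simply substitute this definition into each case and keep track of the powers of $q(t)$ dictated by the number of components $r$. No new geometric input is needed; everything rests on Theorem~\ref{thm:skein}, Lemma~\ref{lem:mod-2-sequence}, and Lemma~\ref{lem:eigenspace-sequence}, which were already used to produce the two $\chi$-relations.

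First I would dispose of the first case. Here $K_{+}$ and $K_{-}$ each have $r$ components while $K_{0}$ has $r+1$, so $\tilde\chi(Y,K_\pm,[\Sigma_\pm]) = q^{1-r}\chi(Y,K_\pm,[\Sigma_\pm])$ whereas $\tilde\chi(Y,K_0,[\Sigma_0]) = q^{-r}\chi(Y,K_0,[\Sigma_0])$. Multiplying the first skein relation for $\chi$ through by $q^{1-r}$ turns the left-hand side into $\tilde\chi(Y,K_+,[\Sigma_+]) - \tilde\chi(Y,K_-,[\Sigma_-])$, while the mismatch of one in the component count means the surviving $K_{0}$ term is $q^{1-r}\chi(Y,K_0,[\Sigma_0]) = q(t)\,\tilde\chi(Y,K_0,[\Sigma_0])$. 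This is exactly the asserted relation, with the single factor $q(t) = t^{1/2}-t^{-1/2}$ produced by the shift $r \mapsto r+1$.

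In the second case $K_{0}$ has $r-1$ components, so $\tilde\chi(Y,K_0,[\Sigma_0]) = q^{2-r}\chi(Y,K_0,[\Sigma_0])$ carries two more powers of $q(t)$ than the $K_\pm$ terms. These two powers are precisely what is needed to reabsorb the factor $r(t) = -q(t)^2$ appearing in the second skein relation: after multiplying that relation by $q^{1-r}$, the product $q^{1-r}\,r(t)\,\chi(Y,K_0,[\Sigma_0])$ collapses to a single power of $q(t)$ times $\tilde\chi(Y,K_0,[\Sigma_0])$. Hence both cases reduce to the one stated equation, which is the content of the proposition.

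The only step requiring genuine care is the bookkeeping of signs and exponents. One must confirm that the exponent $1-r$ is the correct normalization --- it is forced by matching the component counts $r \mapsto r \pm 1$ under a skein move --- and, more delicately, that the sign carried by $r(t) = -q(t)^2$ combines with the grading signs supplied by Lemma~\ref{lem:mod-2-sequence} to yield the coefficient $+q(t)$ rather than $-q(t)$ uniformly across both cases. Since the grading parities of the maps in the two exact sequences are what determine these signs, I expect the reconciliation of the sign in the $K_{0}$ term to be the one point worth checking explicitly, everything else being routine manipulation of powers of $q(t)$.
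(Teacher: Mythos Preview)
Your proposal is correct and matches the paper's approach exactly: the proposition is stated in the paper with an immediate \qed, since it is precisely the formal consequence of substituting the normalization $\tilde\chi = q^{1-r}\chi$ into the two displayed $\chi$-relations together with $r(t)=-q(t)^{2}$, just as you describe. Your identification of the sign bookkeeping as the only point requiring care is also apt.
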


In the case of classical knots and links, we may write this simply as
\[
                 \tilde \chi (K_{+}) -  \tilde\chi
            (K_{-})=   (t^{1/2}-t^{-1/2})\,\tilde\chi
            (K_{0}).    
\]
This is the exactly the skein relation of the (single-variable)
normalized Alexander  polynomial
$\Delta$. The latter is
normalized so that  $\Delta=1$ for the unknot, whereas our $\tilde\chi$ is
$-1$ for the unknot because the generator of its knot homology is in
odd degree. We therefore have:

\begin{theorem}
For any link $K$ in $S^{3}$, we have
\[
                \tilde\chi(K) = - \Delta_{K}(t),
\]
where $\tilde\chi(K)$ is the normalized Euler characteristic
\eqref{eq:renormalized} and $\Delta_{K}$ is the Alexander polynomial
of the link with Conway's normalization.\qed
\end{theorem}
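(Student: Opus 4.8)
The plan is to observe that the normalized Euler characteristic $\tilde\chi$ has, by the work assembled above, been shown to obey precisely the skein relation that \emph{defines} the Conway-normalized Alexander polynomial, and then to close the argument by the classical uniqueness of that invariant. Concretely, the Proposition immediately preceding the theorem gives, for any skein triple $K_{+},K_{-},K_{0}$ of oriented links in $S^{3}$,
\[
    \tilde\chi(K_{+}) - \tilde\chi(K_{-}) = (t^{1/2}-t^{-1/2})\,\tilde\chi(K_{0}),
\]
while the mod $2$ grading convention places the single generator of $\KHI(U)$ in odd degree, so that $\tilde\chi(U) = -1$. Since the skein relation is homogeneous, the function $-\Delta$ shares both features of $\tilde\chi$: it satisfies the identical relation and also takes the value $-1$ on the unknot. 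It therefore suffices to show that these two data determine a link invariant uniquely.

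First I would recall that $\Delta$ is the unique ambient-isotopy invariant of oriented links in $S^{3}$ with value $1$ on the unknot satisfying the above skein relation, so that $-\Delta$ is the unique such invariant with value $-1$ on $U$. The uniqueness is proved by the standard double induction: ordering the crossings of a diagram and applying the skein relation to switch them one at a time expresses $\tilde\chi(K_{+})$ through $\tilde\chi(K_{-})$ and $\tilde\chi(K_{0})$, where $K_{0}$ has strictly fewer crossings; switching according to a chosen set of basepoints and component ordering converts any $r$-component diagram into a descending diagram of the $r$-component unlink. Thus any invariant obeying the skein relation is determined by its values on unlinks. The value on the unknot is given, and the vanishing on the $r$-component unlink for $r\ge 2$ is itself \emph{forced} by the skein relation: applying it at a Reidemeister~I curl on a split union $L\sqcup U$ yields $\tilde\chi(L)-\tilde\chi(L) = (t^{1/2}-t^{-1/2})\,\tilde\chi(L\sqcup U)$, whence $\tilde\chi(L\sqcup U)=0$. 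Running this reduction (keeping the Seifert surfaces matched across each skein move, as provided by Lemma~\ref{lem:mod-2-sequence} and Lemma~\ref{lem:eigenspace-sequence}) shows that $\tilde\chi$ and $-\Delta$ agree on all unlinks and hence on every link, giving $\tilde\chi(K)=-\Delta_{K}(t)$; the independence of the final answer from the chosen surface confirms that $\tilde\chi(K)$ is well defined.

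The main obstacle is not in this final step at all: every piece of genuine instanton-theoretic content has already been gathered in Theorem~\ref{thm:skein}, Lemma~\ref{lem:mod-2-sequence}, Lemma~\ref{lem:eigenspace-sequence}, and the Proposition, and what remains is purely formal. The single point that merits care is the bookkeeping of normalizations — confirming that the discrepancy between $\tilde\chi(U)=-1$ and $\Delta(U)=+1$ is a global sign rather than one varying with the number of components. This is automatic, since $\tilde\chi$ and $-\Delta$ satisfy one and the same homogeneous skein relation with the same base value, so the uniqueness reduction propagates that single overall sign unchanged and delivers $\tilde\chi = -\Delta$ verbatim.
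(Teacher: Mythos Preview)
Your proposal is correct and follows essentially the same approach as the paper: both arguments conclude by noting that $\tilde\chi$ satisfies the Conway skein relation with $\tilde\chi(U)=-1$, and then invoke the uniqueness of the invariant characterized by this data to obtain $\tilde\chi=-\Delta$. You have simply spelled out the standard skein-induction uniqueness argument (reduction to unlinks, forced vanishing on split unions) that the paper leaves implicit.
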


In the case that $K$ is a knot, we have $\tilde\chi(K)=\chi(K)$, which
is the case given in Theorem~\ref{thm:main} in the introduction. \qed

\begin{remark}
    The equality $r(t)= - q(t)^{2}$ can be interpreted as arising from
    the isomorphism
    \[
                  I_{*}(\Sigma_{2} \times S^{1})_{w,+2} \cong V\otimes V,
     \]
     with the additional observation that the isomorphism between
     these two is odd with respect to the preferred $\Z/2$ gradings.
\end{remark}

\section{Applications}
\label{sec:applications}

\subsection{Fibered knots}

In \cite{KM-sutures}, the authors adapted the argument of Ni \cite{Ni-A}
to establish a criterion for a knot $K$ in $S^3$ to be a fibered knot: in
particular, Corollary~7.19 of \cite{KM-sutures} states that $K$
is fibered if the following three conditions hold:
\begin{enumerate}
    \item the Alexander polynomial $\Delta_{K}(T)$ is monic, in the
    sense that its leading coefficient is $\pm 1$;
    \item the leading coefficient occurs in degree $g$, where
    $g$ is the genus of the knot; and
    \item the dimension of $\KHI(K,g)$ is $1$.
\end{enumerate}
It follows from our Theorem~\ref{thm:main} that the last of these
three conditions implies the other two. So we have:

\begin{proposition}\label{prop:fibered-knot}
    If $K$ is a knot in $S^{3}$ of genus $g$, then $K$ is fibered if
    and only if the dimension of $\KHI(K,g)$ is $1$. \qed
\end{proposition}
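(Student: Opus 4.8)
The plan is to treat the two implications separately, putting the new input supplied by Theorem~\ref{thm:main} into the direction $\dim\KHI(K,g)=1\Rightarrow K$ fibered, and leaning on the sutured decomposition machinery for the converse. The organizing observation is the one already made in the text: Corollary~7.19 of \cite{KM-sutures} asserts that $K$ is fibered provided that (1) $\Delta_{K}$ is monic, (2) its leading term sits in degree $g$, and (3) $\dim\KHI(K,g)=1$. So it suffices to show that condition (3) by itself forces (1) and (2), after which the three conditions together yield the ``if'' direction.

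First I would fix a minimal-genus Seifert surface $\Sigma$, so that $g(\Sigma)=g$; since $H_{2}(M,\partial M)\cong\Z$ for a knot complement, the class $[\Sigma]$ and hence the whole eigenspace decomposition are intrinsic to $K$, and with a minimal-genus surface the decomposition runs over $-g\le j\le g$. In particular $\KHI(K,j)=0$ for $|j|>g$, so by Theorem~\ref{thm:main} the polynomial $\Delta_{K}(t)=-\sum_{j}\chi(\KHI(K,j))\,t^{j}$ has degree at most $g$. Now assume $\dim\KHI(K,g)=1$. Its single generator lies in one mod-$2$ grading, so the even and odd ranks of the top summand are $(1,0)$ or $(0,1)$, whence $\chi(\KHI(K,g))=\pm1$. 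The essential point is that a one-dimensional summand leaves no room for cancellation, so the Euler characteristic of the top summand is \emph{forced to be nonzero}; by Theorem~\ref{thm:main} the coefficient of $t^{g}$ in $\Delta_{K}$ is then $\mp1$. Hence $\Delta_{K}$ is monic with its leading term in degree exactly $g$, verifying (1) and (2). As (3) is the standing hypothesis, Corollary~7.19 of \cite{KM-sutures} gives that $K$ is fibered.

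For the converse I would argue with sutured manifolds. When $K$ is fibered with fiber $\Sigma$, cutting the knot complement along $\Sigma$ produces a product sutured manifold $\Sigma\times[-1,1]$. Decomposing along the taut surface $\Sigma$ and invoking the instanton sutured decomposition isomorphism (Proposition~6.7 of \cite{KM-sutures}, as adapted there to instanton homology) should identify the extremal summand $\KHI(K,g)$ with $\SHI$ of this product, and the latter is one-dimensional; hence $\dim\KHI(K,g)=1$.

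I expect the forward direction to be essentially formal once Theorem~\ref{thm:main} is in hand, the genuine content being packaged into the two quoted results. The main obstacle lies in the converse, where one must check that decomposing along the fiber lands in \emph{precisely} the extremal eigenvalue $2g$ (rather than in some intermediate eigenspace) and that the resulting product sutured manifold indeed has rank-one instanton homology; these are facts about the sutured instanton theory of \cite{KM-sutures} rather than about the Alexander polynomial, and are where care is required.
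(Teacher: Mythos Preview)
Your proposal is correct and follows essentially the same route as the paper: the key ``if'' direction is obtained exactly as you describe, by using Theorem~\ref{thm:main} to show that $\dim\KHI(K,g)=1$ forces the leading coefficient of $\Delta_{K}$ to be $\pm1$ in degree $g$, after which Corollary~7.19 of \cite{KM-sutures} applies. The paper is terser than you are---it simply asserts that (3) implies (1) and (2)---and it does not spell out the ``only if'' direction at all, treating it as already established in \cite{KM-sutures}; your sketch of that converse via sutured decomposition along the fiber is the right idea and matches what is done there.
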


\subsection{Counting representations}

We describe some applications to representation varieties associated
to 
classical knots $K\subset S^{3}$. The
instanton knot homology $\KHI(K)$ is defined in terms of the preferred
closure $Z=Z(K)$ described at \eqref{eq:special-closure}, and
therefore involves the flat connections
\[
                \Rep(Z)_{w} \subset \bonf(Z)_{w}
\]
in the space of connections
$\bonf(Z)_{w}$: the quotient by the determinant-1 gauge group of the
space of all $\PU(2)$ connections in $\PP(E_{w})$, where $E_{w}\to Z$
is a $U(2)$ bundle with $\det(E)=w$. If the space of
these flat connections in $\bonf(Z)_{w}$ is non-degenerate in the
Morse-Bott sense when regarded as the set of critical points of the
Chern-Simons functional, then we have
\[
                    \dim I_{*}(Z)_{w} \le \dim H_{*}(\Rep(Z)_{w}).
\]
The generalized eigenspace $I_{*}(Z)_{w,+2}\subset I_{*}(Z)_{w}$ has
half the dimension of the total, so
\[
                    \dim \KHI(K) \le \frac{1}{2} \dim H_{*}(\Rep(Z)_{w}).
\]

As explained in \cite{KM-sutures}, the representation variety
$\Rep(Z)_{w}$ is closely related to the space
    \[
            \Rep(K,\bi) = \{ \, \rho: \pi_{1}(S^{3}
            \setminus K) \to \SU(2) \mid \rho(m) =
            \bi \,\},
    \]
    where  $m$ is a chosen meridian and
\[
            \bi = 
            \begin{pmatrix}
                i & 0 \\ 0 & -i
            \end{pmatrix}.
\]
More particularly, there is a two-to-one covering map
\begin{equation}\label{eq:covering}
            \Rep(Z)_{w} \to \Rep(K,\bi).
\end{equation}
The circle subgroup  $\SU(2)^{\bi}\subset \SU(2)$ which stabilizes $\bi$ acts on
$\Rep(K,\bi)$ by conjugation. There is a unique reducible element in
$\Rep(K,\bi)$ which is fixed by the circle action; the remaining
elements are irreducible and have stabilizer $\pm 1$. The most
non-degenerate situation that can arise, therefore, is that
$\Rep(K,\bi)$ consists of a point (the reducible) together with
finitely many circles, each of which is Morse-Bott. In such a case,
the covering \eqref{eq:covering} is trivial. As in
\cite{KM-knot-singular}, the corresponding non-degeneracy condition at
a flat connection $\rho$ can be interpreted as the condition that the
map
\[
                H^{1}(S^{3}\setminus K; \g_{\rho}) \to H^{1}(m ;
                \g_{\rho}) = \R
\]
is an isomorphism. Here $\g_{\rho}$ is the local system on the knot
complement with fiber $\su(2)$, associated to the representation
$\rho$. We therefore have:

\begin{corollary}
    Suppose that the representation variety $\Rep(K,\bi)$ associated
    to the complement of a classical knot $K\subset S^{3}$ consists of
    the reducible representation and $n(K)$ conjugacy classes of
    irreducibles, each of which is non-degenerate in the above sense.
    Then
    \[
            \dim \KHI(K) \le 1 + 2n(K).
    \]
\end{corollary}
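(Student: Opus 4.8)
The plan is to assemble the chain of inequalities recalled immediately before the corollary with a direct computation of the Betti numbers of the representation variety. First I would unpack the hypothesis on $\Rep(K,\bi)$. By assumption this variety is the disjoint union of the single reducible representation, which is the unique fixed point of the conjugation action of the circle $\SU(2)^{\bi}$, together with $n(K)$ conjugacy classes of irreducibles. Since each irreducible has stabilizer $\pm 1$ in $\SU(2)^{\bi}$, each of these conjugacy classes is a single circle orbit. The assumed non-degeneracy---that the restriction map $H^{1}(S^{3}\setminus K;\g_{\rho})\to H^{1}(m;\g_{\rho})$ is an isomorphism at each irreducible $\rho$---is exactly the statement that these circles are Morse--Bott critical manifolds. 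Thus we are in the most non-degenerate situation described above, in which the two-to-one cover $\Rep(Z)_{w}\to\Rep(K,\bi)$ of \eqref{eq:covering} is trivial.

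Second, I would count Betti numbers. The isolated reducible point contributes $1$ to $\dim H_{*}(\Rep(K,\bi))$, and each of the $n(K)$ irreducible circles contributes $2$, so $\dim H_{*}(\Rep(K,\bi)) = 1 + 2n(K)$. Because the cover is trivial, $\Rep(Z)_{w}$ is two disjoint copies of $\Rep(K,\bi)$, and hence $\dim H_{*}(\Rep(Z)_{w}) = 2\bigl(1+2n(K)\bigr)$.

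Finally, I would feed this into the two facts already established: the Morse--Bott inequality $\dim I_{*}(Z)_{w}\le\dim H_{*}(\Rep(Z)_{w})$, valid because $\Rep(Z)_{w}$ is a non-degenerate critical set for the Chern--Simons functional, together with the identity $\dim\KHI(K)=\frac{1}{2}\dim I_{*}(Z)_{w}$ coming from the fact that the $\mu(y)=+2$ eigenspace has half the total dimension. Combining these yields $\dim\KHI(K)\le\frac{1}{2}\dim H_{*}(\Rep(Z)_{w}) = 1+2n(K)$, as claimed.

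The step I expect to be the main obstacle is the transfer of non-degeneracy: one must be sure that the cohomological condition on $\Rep(K,\bi)$ translates into the Morse--Bott non-degeneracy of $\Rep(Z)_{w}$ as a critical set of the Chern--Simons functional on $\bonf(Z)_{w}$, so that the Morse--Bott inequality genuinely applies. Because \eqref{eq:covering} is a local diffeomorphism, the Morse--Bott property passes between the two sides once one knows that the Hessian of the Chern--Simons functional is non-degenerate transverse to the critical orbits precisely when the restriction map on $H^{1}$ is an isomorphism; this identification is the substance of the discussion preceding the corollary, leaving only the bookkeeping above.
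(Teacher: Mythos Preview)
Your proposal is correct and follows essentially the same argument as the paper: compute the total Betti number of $\Rep(K,\bi)$ as $1+2n(K)$, double it via the trivial cover \eqref{eq:covering}, and then apply the Morse--Bott inequality together with the half-dimension identity $\dim\KHI(K)=\tfrac{1}{2}\dim I_{*}(Z)_{w}$.
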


\begin{proof}
    Under the given hypotheses, the representation variety
    $\Rep(K,\bi)$ is a union of a single point and $n(K)$ circles. Its
    total Betti number is therefore $1 + 2 n(K)$. The representation
    variety $\Rep(Z)_{w}$ is a trivial double cover \eqref{eq:covering},
    so the total Betti number of $\Rep(Z)_{w}$ is twice as large, $2 +
    4n(K)$.
\end{proof}

Combining this with Corollary~\ref{cor:alexander-vs-rank}, we obtain:

\begin{corollary}
    Under the hypotheses of the previous corollary, we have
    \[
                \sum_{j=-d}^{d} |a_{j}| \le 1 + 2n(K)
    \]
    where the $a_{j}$ are the coefficients of the Alexander
    polynomial.     \qed
\end{corollary}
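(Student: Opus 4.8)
The plan is to obtain the stated inequality simply by chaining the two results that immediately precede it. The left-hand quantity $\sum_{j=-d}^{d}|a_{j}|$ is, by Corollary~\ref{cor:alexander-vs-rank}, a lower bound for the rank of $\KHI(K)$; and the previous corollary supplies the upper bound $1+2n(K)$ for that same rank under the stated non-degeneracy hypotheses on $\Rep(K,\bi)$. Composing these two bounds by transitivity of $\le$ yields the claim at once, which is why the statement already carries a \qed.

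In more detail, I would first invoke Corollary~\ref{cor:alexander-vs-rank}. That corollary is itself a direct consequence of Theorem~\ref{thm:main}: since $\Delta_{K}(t)=-\sum_{j}\chi(\KHI(K,j))\,t^{j}$, the coefficients satisfy $|a_{j}|=|\chi(\KHI(K,j))|\le \dim\KHI(K,j)$ summand by summand, and summing over $j$ while using the eigenspace decomposition \eqref{eq:eigenspace-decomposition} gives $\sum_{j}|a_{j}|\le \sum_{j}\dim\KHI(K,j)=\dim\KHI(K)$. I would then quote the previous corollary verbatim to get $\dim\KHI(K)\le 1+2n(K)$. Concatenating the two displays produces
\[
    \sum_{j=-d}^{d}|a_{j}| \;\le\; \dim\KHI(K) \;\le\; 1+2n(K),
\]
which is the assertion.

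I do not expect any genuine obstacle here; the only point requiring a moment's care is that the two input inequalities must refer to the \emph{same} vector space and the same numerical invariant, so that they may be composed without reconciling conventions. Since the previous corollary phrases its conclusion as a bound on $\dim\KHI(K)$, Corollary~\ref{cor:alexander-vs-rank} phrases its conclusion as a bound on the rank of $\KHI(K)$, and rank over $\C$ coincides with complex dimension, the match is immediate. Thus the substantive content lives entirely in the two antecedent results, and this corollary is merely their concatenation.
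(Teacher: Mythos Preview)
Your proposal is correct and matches the paper's approach exactly: the corollary is stated with a \qed\ precisely because it is the immediate concatenation of Corollary~\ref{cor:alexander-vs-rank} with the preceding corollary, via the common quantity $\dim\KHI(K)$. There is nothing to add.
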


Among all the irreducible elements of $\Rep(K,\bi)$, we can
distinguish the subset consisting of those $\rho$ whose image is
binary dihedral: contained, that is, in the normalizer of a circle
subgroup whose infinitesimal generator $J$ satisfies
$\mathrm{Ad}(\bi)(J)=-J$. 
If $n'(K)$ denotes the number of such
irreducible binary dihedral representations, then one has
\[
                   | \det(K) | = 1 + 2n'(K).
\]
(see \cite{Klassen}). On the other hand, the determinant
$\det(K)$ can also be computed as the value of the Alexander
polynomial at $-1$: the alternating sum of the coefficients. Thus we
have:

\begin{corollary}
        Suppose that the Alexander polynomial of $K$ fails to be
        alternating, in the sense that
        \[
                    \left| \sum_{j=-d}^{d} (-1)^{j} a_{j} \right|
                    <  \sum_{j=-d}^{d} | a_{j}|.
        \]
        Then either $\Rep(K,\bi)$ contains some representations that
        are not binary dihedral, or some of the binary-dihedral
        representations are degenerate as points of this
        representation variety. \qed
\end{corollary}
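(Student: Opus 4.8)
The plan is to prove the contrapositive. I would assume that every irreducible element of $\Rep(K,\bi)$ is binary dihedral and that all of these representations are non-degenerate as points of the representation variety, and then deduce that the Alexander polynomial is in fact alternating, that is, $\bigl|\sum_{j}(-1)^{j}a_{j}\bigr| = \sum_{j}|a_{j}|$. This contradicts the stated hypothesis and thereby establishes the corollary.

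The argument is a concatenation of the numerical bounds already assembled in this section. First, since every irreducible is assumed non-degenerate, the hypotheses of the penultimate corollary are met, so I may invoke the bound $\sum_{j}|a_{j}| \le 1 + 2n(K)$, where $n(K)$ counts the conjugacy classes of irreducibles in $\Rep(K,\bi)$. Second, the assumption that every irreducible is binary dihedral means precisely that $n(K) = n'(K)$. Combining these with Klassen's identity $|\det(K)| = 1 + 2n'(K)$ and the fact that $\det(K)$ is the value of the Alexander polynomial at $-1$ yields
\[
    \sum_{j}|a_{j}| \;\le\; 1 + 2n(K) \;=\; 1 + 2n'(K) \;=\; |\det(K)| \;=\; \Bigl|\sum_{j}(-1)^{j}a_{j}\Bigr|.
\]
Since the reverse inequality $\bigl|\sum_{j}(-1)^{j}a_{j}\bigr| \le \sum_{j}|a_{j}|$ is simply the triangle inequality, the two quantities must be equal, which is exactly the desired conclusion.

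There is no essential obstacle in this argument, as the corollary is a formal consequence of results already in hand. The only point that calls for any attention is verifying that the stated hypotheses feed correctly into the rank bound: the non-degeneracy of the binary-dihedral representations, together with the absence of any other irreducibles, is exactly the condition that \emph{all} irreducibles are non-degenerate, which is what the penultimate corollary requires. Once that identification is made, the chain of (in)equalities above closes immediately.
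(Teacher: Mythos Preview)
Your proposal is correct and is exactly the argument the paper has in mind: the corollary is stated with a bare \qed, since it is the immediate contrapositive obtained by combining the inequality $\sum_{j}|a_{j}|\le 1+2n(K)$ from the previous corollary, the identification $n(K)=n'(K)$ under the assumption that every irreducible is binary dihedral, Klassen's formula $|\det(K)|=1+2n'(K)$, and the triangle inequality. Your final paragraph correctly isolates the only point needing care, namely that the two negated alternatives together yield precisely the non-degeneracy hypothesis of the preceding corollary.
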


This last corollary is nicely illustrated by the torus knot
$T(4,3)$. This knot is the first non-alternating knot in Rolfsen's
tables \cite{Rolfsen}, where it appears as $8_{19}$. The Alexander
polynomial of $8_{19}$ is not alternating in the sense of the
corollary; and as the corollary suggests, the representation variety
$\Rep(8_{19}; \bi)$ contains representations that are not binary
dihedral. Indeed, there are representations whose image is the binary
octahedral group in $\SU(2)$.

\bibliographystyle{abbrv}
\bibliography{alexander}

\begin{thebibliography}{10}

\bibitem{Braam-Donaldson}
P.~J. Braam and S.~K. Donaldson.
\newblock Floer's work on instanton homology, knots and surgery.
\newblock In {\em The Floer memorial volume}, volume 133 of {\em Progr. Math.},
  pages 195--256. Birkh\"auser, Basel, 1995.

\bibitem{Donaldson-book}
S.~K. Donaldson.
\newblock {\em Floer homology groups in {Y}ang-{M}ills theory}, volume 147 of
  {\em Cambridge Tracts in Mathematics}.
\newblock Cambridge University Press, Cambridge, 2002.
\newblock With the assistance of M. Furuta and D. Kotschick.

\bibitem{Floer-Durham-paper}
A.~Floer.
\newblock Instanton homology, surgery, and knots.
\newblock In {\em Geometry of low-dimensional manifolds, 1 (Durham, 1989)},
  volume 150 of {\em London Math. Soc. Lecture Note Ser.}, pages 97--114.
  Cambridge Univ. Press, Cambridge, 1990.

\bibitem{Juhasz-1}
A.~Juh{\'a}sz.
\newblock Holomorphic discs and sutured manifolds.
\newblock {\em Algebr. Geom. Topol.}, 6:1429--1457 (electronic), 2006.

\bibitem{Klassen}
E.~P. Klassen.
\newblock Representations of knot groups in {${\rm SU}(2)$}.
\newblock {\em Trans. Amer. Math. Soc.}, 326(2):795--828, 1991.

\bibitem{KM-book}
P.~B. Kronheimer and T.~S. Mrowka.
\newblock {\em Monopoles and three-manifolds}.
\newblock New Mathematical Monographs. Cambridge University Press, Cambridge,
  2007.

\bibitem{KM-knot-singular}
P.~B. Kronheimer and T.~S. Mrowka.
\newblock Knot homology groups from instantons.
\newblock Preprint, 2008.

\bibitem{KM-sutures}
P.~B. Kronheimer and T.~S. Mrowka.
\newblock Knots, sutures and excision.
\newblock Preprint, 2008.

\bibitem{Lim}
Y.~Lim.
\newblock Instanton homology and the {A}lexander polynomial.
\newblock {Preprint}, 2009.

\bibitem{Munoz}
V.~Mu{\~n}oz.
\newblock Ring structure of the {F}loer cohomology of {$\Sigma\times{\bf S}\sp
  1$}.
\newblock {\em Topology}, 38(3):517--528, 1999.

\bibitem{Ni-A}
Y.~Ni.
\newblock Knot {F}loer homology detects fibred knots.
\newblock {\em Invent. Math.}, 170(3):577--608, 2007.

\bibitem{Ozsvath-Szabo-knotfloer}
P.~Ozsv{\'a}th and Z.~Szab{\'o}.
\newblock Holomorphic disks and knot invariants.
\newblock {\em Adv. Math.}, 186(1):58--116, 2004.

\bibitem{Rasmussen-thesis}
J.~Rasmussen.
\newblock {\em Floer homology and knot complements}.
\newblock PhD thesis, Harvard University, 2003.

\bibitem{Rolfsen}
D.~Rolfsen.
\newblock {\em Knots and links}, volume~7 of {\em Mathematics Lecture Series}.
\newblock Publish or Perish Inc., Houston, TX, 1990.
\newblock Corrected reprint of the 1976 original.

\end{thebibliography}

\end{document}